\newcommand{\eps}{\varepsilon}
\theoremstyle{plain}
\newtheorem{thm}{Theorem}[section]
\newtheorem{lem}[thm]{Lemma}
\newtheorem{prop}[thm]{Proposition}
\newtheorem{cor}[thm]{Corollary}
\noindent \emph{Proof.} {}{#1}{}}{\hfill
\theoremstyle{plain} 
\newcommand{\thistheoremname}{}
\newtheorem{genericthm}[section]{\thistheoremname}
\theoremstyle{definition}
\def\less{\setminus}
\def\dfn#1{{\sl #1}}
\def\less{\setminus}
\def\NN{\mathbb N}
\def\rl{r_\ell}
\title{Multicolor Ramsey  Number for Double Stars}
\author{Jake Ruotolo\thanks{Supported by  the Honors Undergraduate Thesis (HUT) Scholarship at the University of Central Florida.  Current address: School of Engineering and Applied Sciences,
Harvard University, Cambridge,  MA 02134, USA. E-mail address: {\tt  jakeruotolo@g.harvard.edu}.}\,\, and  Zi-Xia Song\thanks{Supported by  NSF grant    DMS-2153945. E-mail address: {\tt Zixia.Song@ucf.edu}.}}
  \affil{ 
  { \small {Department  of Mathematics, University of Central Florida, Orlando, FL 32816, USA}}  
     }
\date{}
\begin{document}
\maketitle
\begin{abstract}
For a graph $H$ and an integer $k\ge1$, let $r(H;k)$ and $r_\ell(H;k)$ denote the $k$-color Ramsey number and list Ramsey number of $H$, respectively.  Alon, Buci\'c, Kalvari, Kuperwasser and Szab\'o  in 2021 initiated the systematic study of list Ramsey numbers of graphs and hypergraphs,   and conjectured that  $ r(K_{1,n};k)$ and $r_\ell(K_{1,n};k)$ are always equal. Motivated by their work,  we study the $k$-color Ramsey number for  double stars $S(n,m)$, where $n\ge m\ge1$.  To the best of our knowledge, little is known on the exact value of $r(S(n,m);k)$ when $k\ge3$. 
A classic result of Erd\H{o}s and  Graham from 1975 asserts that $r(T;k)>k(n-1)+1$  for every tree $T$ with $n\ge 1$ edges and  $k$ sufficiently large such that    $n$  divides $k-1$.  Using a folklore double counting argument in set system and the edge chromatic number of complete graphs,  we prove that if  $k$ is odd and $n$ is sufficiently large compared with  $m$ and  $k$, then 
\[ r(S(n,m);k)=kn+m+2.\]
This is a step in our effort to determine whether $r(S(n,m);k)$ and $r_\ell(S(n,m);k)$ are always equal, which remains wide open. 
We also prove that $ r(S^m_n;k)=k(n-1)+m+2$ if  $k $ is odd and $n$ is sufficiently large compared with  $m$ and $k$, where  $1\le m\le n$ and   $S^m_n$ is obtained from $K_{1, n}$ by subdividing $m$ edges each exactly once. We end the paper with some  observations towards  the  list Ramsey number for $S(n,m)$ and $S^m_n$.

\end{abstract}

\baselineskip 16pt

\section{Introduction}
 In this paper we consider graphs that are finite, simple and undirected.      We use $K_{1, n}$  and $K_n$ to denote the star on $n+1$ vertices and  complete graph  on $n$ vertices, respectively.   
The \dfn{double star} $S(n,m)$, where $n\ge m\ge1$,     is the graph consisting of the disjoint union of two stars  $K_{1,n}$  and  $K_{1,m}$ together with an edge  joining their centers.  For integers $n\ge2$ and  $n\ge m\ge1$, let $S_n^m$ denote the graph obtained from $K_{1, n}$ by subdividing $m$ edges each exactly once. Note that $S_n^1=S(n-1, 1)$ and   $S_2^1=S(1, 1)=P_4$,    where $P_4$ denotes the path on four vertices.  
For any positive integer $k$, we write  $[k]$ for the set $\{1,2, \ldots, k\}$. A \dfn{$k$-edge-coloring} of a graph $G$ is a function $\tau:E(G)\to [k]$. We say that $\tau$ is \dfn{proper} if $\tau(e)\ne \tau(e')$  for any two adjacent edges $e, e'$   of $G$. We think of the set  $[k]$ as a set of colors, and we may identify a member of $[k]$ as a color, say, color $k$ is blue. We simply use the term edge-coloring if we do not wish to make reference to the number of colors $k$. 
The $k$-color \dfn{Ramsey number} $r(H; k)$ of a graph  $H$   is the smallest integer $n$ such that every  $k$-edge-coloring of   $K_n$  contains a monochromatic copy of $H$.
In analogy with the well-studied list-coloring version of the chromatic number, Alon, Buci\'c, Kalvari, Kuperwasser, and Szab\'o~\cite{ABKKS2019} recently defined a variant of $r(H;k)$ called the \dfn{list Ramsey number}. Let $L:E(K_n)\to \binom{\NN}{k}$ that assigns a set of $k$ colors to each edge of $K_n$. An \dfn{$L$-edge-coloring of $K_n$} is an edge-coloring where each edge $e$ is given a color in $L(e)$. The $k$-color \dfn{list Ramsey number} $\rl(H;k)$ of a graph $H$ is defined as the smallest $n$ such that there is some $L:E(K_n)\to \binom{\NN}{k}$ for which every $L$-edge-coloring of $K_n$ contains a monochromatic copy of $H$. Taking $L$ to be constant across all edges, we see that \[\rl(H;k)\le r(H;k).\] 
   The authors of \cite{ABKKS2019}
 proved that  $\rl(K_3;k)$ grows  exponential in the square root of $k$.   Fox, He, Luo and Xu~\cite{FHLX2021} continued the work of \cite{ABKKS2019} on bounding multicolor list Ramsey numbers in general. The authors of \cite{ABKKS2019} also investigated when the two Ramsey numbers $\rl(H;k)$ and  $r(H;k)$ are equal, and in general, how far apart they can be from each other. They conjectured that  $\rl(K_{1,n};k) =  r(K_{1,n};k)$ for all $k, n$; and they further posed the question whether $\rl(K_n;k) = r(K_n;k)$ for all $k, n$. The results on $\rl(K_{1,n};k)$  from \cite{ABKKS2019}  are given  in \cref{s:cr}. Motivated by their work in \cite{ABKKS2019} on stars, we aim to   investigate in this paper when the two Ramsey numbers of double stars   $\rl(S(n,m);k)$ and  $r(S(n,m);k)$ are equal.  It is worth noting that the exact value of the $k$-color Ramsey number  $r(K_{1,n};k)$ is known for all $k, n$. However, the exact value of the $2$-color Ramsey number $r(S(n,m);2)$ is not completely known yet, and to the best of our knowledge,   little is known towards the $k$-color Ramsey number $r(S(n,m);k)$ when $k\ge3$. We list the known results on stars and double stars here that we will need later on.


\begin{thm}[Burr and Roberts~\cite{BR73}]\label{t:stars}  For all $k\ge2$ and $n\ge 1$, 
\[r(K_{1,n};k)= \begin{cases}
k(n-1)+1 &\text{ if }  n \text{ and } k \text{ are even}\\
k(n-1)+2   &\text{ otherwise}.
\end{cases}
\]

 \end{thm}

\begin{thm}[Irving~\cite{Irv74}]\label{t:ramseyP4}
For all $k\in \mathbb{N}$, let $\varepsilon$ be the remainder of $k$ when divided by $3$. Then   $r(S(1,1);3)=6$ and 
\[r(S(1,1);k)= \begin{cases}
2k+2  &\text{ if } \varepsilon=1\\
2k+1 &\text{ if } \varepsilon=2\\
2k  \text{ or } 2k+1 &\text{ if } \varepsilon=0.
\end{cases}
\]
\end{thm}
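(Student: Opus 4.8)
Since $S(1,1)$ is precisely the path $P_4$ on four vertices, my plan is to determine $r(P_4;k)$ by viewing a $k$-coloring of $K_N$ with no monochromatic $P_4$ as a partition of $E(K_N)$ into $k$ graphs, each \emph{$P_4$-free}. I would first record the structural fact that a connected $P_4$-free graph is either a star $K_{1,t}$ or a triangle $K_3$; hence every $P_4$-free graph is a disjoint union of stars and triangles. Writing such a graph on $N$ vertices with $c$ components of which $t$ are triangles, its edge count is $N-(c-t)$, so it has at most $N$ edges, with equality forcing every component to be a triangle (in particular $3\mid N$); and when $3\nmid N$ the bound sharpens to $N-1$.

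For the upper bounds I would run the folklore double-counting argument: if $K_N$ is $k$-colored with no monochromatic $P_4$, then $\binom{N}{2}=\sum_i |E(G_i)|\le k\cdot N$, which gives $N\le 2k+1$ and hence $r(P_4;k)\le 2k+2$ in all cases. The crucial refinement is to examine the boundary $N=2k+1$, where $\binom{2k+1}{2}=k(2k+1)=kN$ and the inequality becomes equality; then every color class must be a spanning union of triangles, which is impossible unless $3\mid(2k+1)$, i.e. unless $\varepsilon=1$. Consequently $r(P_4;k)\le 2k+1$ whenever $\varepsilon\in\{0,2\}$, matching the claimed upper values.

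The lower bounds I would obtain from a uniform ``Kirkman-plus-stars'' construction: choose a vertex subset $S$ with $|S|\equiv 3\pmod 6$, place a Kirkman triple system on $K_S$ (which exists by the Ray-Chaudhuri--Wilson theorem and splits $E(K_S)$ into $(|S|-1)/2$ parallel classes of triangles), and cover the remaining edges by greedily peeling off a star centered at each vertex of $V\setminus S$. When $\varepsilon=1$, taking $S=V$ on $2k+1\equiv 3\pmod 6$ vertices yields $k$ triangle classes and shows $r\ge 2k+2$, so $r=2k+2$. When $\varepsilon=2$, deleting one vertex $v$ from $K_{2k}$ leaves $2k-1\equiv 3\pmod 6$ vertices, giving $k-1$ triangle classes together with the star at $v$; this $P_4$-free $k$-coloring of $K_{2k}$ shows $r\ge 2k+1$, so $r=2k+1$. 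When $\varepsilon=0$, taking $|S|=N-2\equiv 3\pmod 6$ on $N=2k-1$ vertices produces $k-2$ triangle classes and two spanning stars, a $P_4$-free $k$-coloring of $K_{2k-1}$, so $r\ge 2k$.

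The main obstacle is the case $\varepsilon=0$, which is exactly why the theorem only asserts $2k$ or $2k+1$. The upper bound already pins $r\le 2k+1$, so everything reduces to whether $K_{2k}$ admits a $P_4$-free $k$-coloring. Here $2k\equiv 0\pmod 6$, so $K_{2k}$ has no triangle decomposition at all, and the naive Kirkman-plus-stars scheme leaves three extra vertices and spends $k+1$ colors rather than $k$; one must instead hunt for a genuinely mixed star-and-triangle decomposition that saves a color, whose existence hinges on delicate near-resolvable designs and is not settled in general. Thus I expect the design-existence question for $K_{2k}$ (rather than any counting) to be the crux, with the residual base value $r(S(1,1);3)=6$ handled by a direct extremal analysis showing $K_6$ is not $3$-colorable while $K_5$ is.
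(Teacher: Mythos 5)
The paper does not prove this statement at all---it is quoted verbatim from Irving's 1974 paper as a known result---so there is no in-paper argument to compare against. Judged on its own terms, your proof is essentially correct and is the standard route to this theorem. The classification of connected $P_4$-free graphs as stars or triangles, the resulting bound $|E(G_i)|\le N$ (with equality forcing a spanning union of triangles, hence $3\mid N$), the double count $\binom{N}{2}\le kN$, and the refinement ruling out $N=2k+1$ when $3\nmid 2k+1$ are all sound, and your Kirkman-plus-stars constructions give the matching lower bounds: the arithmetic checks out in each residue class ($2k+1$, $2k-1$, $2k-3$ are all $\equiv 3\pmod 6$ in the respective cases, yielding $k$, $k-1$, $k-2$ parallel classes), and the Ray-Chaudhuri--Wilson theorem supplies the needed resolvable designs. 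This correctly establishes $r(P_4;k)=2k+2$ for $\varepsilon=1$, $r(P_4;k)=2k+1$ for $\varepsilon=2$, and $2k\le r(P_4;k)\le 2k+1$ for $\varepsilon=0$.

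The one genuine gap is the claim $r(S(1,1);3)=6$, which the theorem asserts explicitly and which your general machinery only brackets as $6\le r\le 7$. You wave at ``a direct extremal analysis showing $K_6$ is not $3$-colorable,'' but this is precisely the part that needs work: $K_6$ has $15$ edges and three $P_4$-free classes could in principle hold up to $6+6+6=18$ edges, so no counting argument suffices. One must argue structurally, e.g.\ that a class with $6$ edges is a pair of disjoint triangles, a class with $5$ edges has exactly one star component, and then check that no three such graphs can partition $E(K_6)$ (this is where Irving's case analysis lives). As written, that specific equality is asserted rather than proved; the rest of the theorem is fully established by your argument.
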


\begin{thm}[Grossman, Harary and Klawe~\cite{GHK79}]\label{t:dstar1}
\[r(S(n,m); 2) = \begin{cases}
\max\{2n+1, n+2m+2\} &\text{ if n is odd and $m\leq 2$},\\
\max\{2n+2, n+2m+2\} &\text{ if n is even or $m\geq 3$, and $n\leq \sqrt{2}m$ or $n\geq 3m$}.
\end{cases}
\]
 For all $n\ge 2$, $r(S(n,1); 2)=2n+2-\varepsilon$, where $\varepsilon$ is the remainder of $n$ when divided by $2$.
\end{thm}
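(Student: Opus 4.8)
The plan is to reduce everything to a single structural criterion for containment of a double star and then prove matching lower and upper bounds. Write $d_G$ and $N_G$ for degree and neighbourhood in a graph $G$. The criterion I would establish first is: since $n\ge m$, a graph $G$ contains $S(n,m)$ if and only if there is an edge $uv$ with $\max\{d_G(u),d_G(v)\}\ge n+1$, $\min\{d_G(u),d_G(v)\}\ge m+1$, and $|N_G(u)\cup N_G(v)|\ge n+m+2$. The two degree conditions say that the endpoints can play the larger and smaller centre, while the last inequality is precisely the defect-Hall condition letting one choose $n$ leaves at the larger centre and $m$ at the smaller so that all $n+m+2$ vertices are distinct. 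I would apply this to both the red graph $R$ and the blue graph $\overline R$ of a $2$-colouring of $K_N$.

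For the lower bound I exhibit two colourings of $K_{N-1}$ with no monochromatic $S(n,m)$. The first is the star construction: because $S(n,m)\supseteq K_{1,n+1}$, any colouring with no monochromatic $K_{1,n+1}$ suffices, and \cref{t:stars} supplies one on $r(K_{1,n+1};2)-1$ vertices, giving $r(S(n,m);2)\ge 2n+1$ if $n$ is odd and $\ge 2n+2$ if $n$ is even. The second construction lives on $n+2m+1$ vertices: partition the vertex set into $A$ with $|A|=n+m+1$ and $B$ with $|B|=m$, colour every edge inside $A$ or inside $B$ blue and every edge between $A$ and $B$ red. The blue graph is then $K_{n+m+1}\sqcup K_m$, whose largest component has only $n+m+1<n+m+2$ vertices and so cannot contain $S(n,m)$; the red graph is the complete bipartite graph on $(A,B)$, in which only the vertices of $B$ have degree $\ge n+1$ while every vertex of $A$ has degree exactly $m<m+1$, so no red edge meets both degree requirements. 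This yields $r(S(n,m);2)\ge n+2m+2$ for all $n\ge m\ge1$, and together the two constructions give the claimed lower bound $\max\{\cdot,\cdot\}$.

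For the upper bound I assume a $2$-colouring of $K_N$ with no monochromatic $S(n,m)$ and force a contradiction at the stated value of $N$, splitting into the two regimes in which each lower bound is tight. The engine is the dichotomy from the criterion: if $v$ has red degree $\ge n+1$, then every red neighbour $u$ must fail to be a small centre, i.e. $d_R(u)\le m$ or $|N_R(u)\cup N_R(v)|\le n+m+1$. In the regime $n\ge 3m$, where the star bound dominates and $N=r(K_{1,n+1};2)$, \cref{t:stars} hands me a big centre $v$ in some colour; I would argue that its $\ge n+1$ same-coloured neighbours cannot all have small degree in that colour without producing a double star in the opposite colour among themselves, since their complementary degrees are at least $N-1-m$, which is large precisely because $n\ge 3m$. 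In the regime $n\le\sqrt2\,m$, where $N=n+2m+2$ and the $n+2m+2$ bound dominates, degrees are forced to be large, so the binding constraint is the union condition: to avoid $S(n,m)$, the high-degree vertices of a colour must have pairwise heavily overlapping neighbourhoods, and double-counting common neighbours (cherries, via $\sum_w\binom{d(w)}{2}$) against $\binom{N}{2}$ caps how many such vertices can coexist; balancing this quadratic count is what produces the threshold $\sqrt2\,m$.

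I expect the upper bound to be the main obstacle, and within it the careful treatment of the union condition $|N(u)\cup N(v)|\ge n+m+2$, which is exactly what distinguishes double stars from plain stars. This term is responsible both for the square-root threshold $\sqrt2\,m$ and, through the existence of a regular colour class of the correct parity (as in \cref{t:stars}), for the parity correction $\eps$. Pinning the counting estimates to the exact constants rather than merely up to lower-order terms is the delicate point, and the intermediate range $\sqrt2\,m<n<3m$ is precisely where the two estimates fail to meet, which is why $r(S(n,m);2)$ is left undetermined there. Finally, the value $r(S(n,1);2)=2n+2-\eps$ would follow by specialising to $m=1$ and checking that the two regimes together cover every $n\ge2$.
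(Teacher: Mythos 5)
This theorem is quoted from Grossman, Harary and Klawe \cite{GHK79}; the paper you are working from gives no proof of it, so there is nothing internal to compare your argument against, and your attempt has to stand on its own. Your structural criterion for containment of $S(n,m)$ (the two degree conditions plus $|N_G(u)\cup N_G(v)|\ge n+m+2$) is correct, and both lower-bound constructions are correct as far as they go. But there are concrete gaps.

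First, the lower bound is incomplete in exactly the case that makes the second line of the theorem delicate: when $n$ is odd, $m\ge 3$ and $n\ge 3m$, the claimed value is $\max\{2n+2,\,n+2m+2\}=2n+2$, so you need a coloring of $K_{2n+1}$ with no monochromatic $S(n,m)$. Your star construction only supplies $2n$ vertices when $n$ is odd (since $r(K_{1,n+1};2)=2n+1$ there), and your bipartite construction only supplies $n+2m+1<2n+1$ vertices. Indeed on $2n+1$ vertices with $n$ odd a monochromatic $K_{1,n+1}$ is unavoidable by a parity count, so the required extremal coloring must exploit the double-star structure itself (this is where the hypothesis $m\ge 3$ enters); a third construction is genuinely needed and is missing. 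Relatedly, the final claim $r(S(n,1);2)=2n+2-\eps$ does not follow by ``specialising $m=1$ and checking the two regimes cover every $n\ge2$'': for $n=2$, $m=1$ one has $\sqrt{2}m<n<3m$, so $n=2$ lies in the uncovered intermediate range and requires a separate argument.

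Second, the upper bound, which you correctly identify as the main obstacle, is only a heuristic sketch. In the regime $n\ge 3m$ you assert that the $\ge n+1$ same-colored neighbours of a big centre ``cannot all have small degree without producing a double star in the opposite colour,'' and in the regime $n\le\sqrt{2}\,m$ you gesture at a cherry count $\sum_w\binom{d(w)}{2}$ balancing against $\binom{N}{2}$; neither computation is carried out, and it is precisely these counts (with exact constants and the parity correction) that constitute the content of the original paper. As written, the proposal establishes the lower bound $\max\{r(K_{1,n+1};2),\,n+2m+2\}$ and a correct reduction of the upper bound to a degree/neighbourhood analysis, but not the theorem.
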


Note that Grossman, Harary and Klawe~\cite{GHK79} further conjectured that the restriction   $n\leq \sqrt{2}m$ or $n\geq 3m$ is not necessary.  Recently,  Norin, Sun and Zhao~\cite{NSZ16} disproved the conjecture for a wide range of values of $m$
and $n$;   using Razborov's flag algebra method,  they confirmed the conjecture when $n\le 1.699(m+1)$. 

\begin{thm}[Norin, Sun and Zhao~\cite{NSZ16}]\label{t:dstar2}
\[r(S(n,m); 2) \ge \begin{cases}
\frac56m+\frac53n+o(m) &\text{ if } n\ge m\ge1,\\
\frac{21}{23}m+\frac{189}{115}n+o(m) &\text{ if } n\ge 2m.
\end{cases}
\]
Furthermore, 
$r(S(n,m); 2)=   
\max\{2n+2, n+2m+2\}=n+2m+2$ when $1\le m\le n\le 1.699(m+1)$. 
\end{thm}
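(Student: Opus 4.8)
The plan is to prove the two halves of the statement separately: the asymptotic lower bounds come from explicit extremal colorings, while the exact value rests on a matching upper bound. Throughout I would use the following embeddability criterion, established first. Fix a color, say red, with red graph $G$ on $V(K_N)$. A red copy of $S(n,m)$ with centers $u$ (the degree-$(n+1)$ center) and $v$ exists if and only if $uv\in E(G)$, $\deg_G(u)\ge n+1$, $\deg_G(v)\ge m+1$, and $|N_G(u)\cup N_G(v)|\ge n+m+2$. The last condition records the one genuinely non-local obstruction: even when both endpoints have large degree, a copy fails to exist if the red neighborhoods overlap too heavily, i.e.\ if $u$ and $v$ have at least $N-(n+m+1)$ common red non-neighbors. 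Every argument below turns on trading off degree against this overlap.

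For $r(S(n,m);2)\ge n+2m+2$ I would exhibit a coloring of $K_{n+2m+1}$ with no monochromatic $S(n,m)$: let the red graph be the disjoint union $K_{n+m+1}\sqcup K_m$. No red copy exists because each red component has at most $n+m+1<n+m+2$ vertices, so the overlap condition fails inside it; the blue graph is $K_{n+m+1,\,m}$, in which the only vertices of blue-degree $\ge n+1$ lie in the part of size $m$, whose neighbors all have blue-degree exactly $m<m+1$ and hence cannot serve as the second center. This recovers the lower bound underlying \cref{t:dstar1}. For the asymptotic bounds $\tfrac56 m+\tfrac53 n+o(m)$ and, when $n\ge 2m$, $\tfrac{21}{23}m+\tfrac{189}{115}n+o(m)$, this two-part construction is optimal only up to $n+2m+1$, so for $n$ large relative to $m$ I would search over colorings built from a fixed small pattern graph (and its blow-ups/iterations), engineered so that the overlap obstruction rules out copies in one color while a degree deficiency rules them out in the other; optimizing the pattern's parameters is a small fractional optimization whose optimum yields the stated rational coefficients.

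The hard part is the matching upper bound: every $2$-coloring of $K_N$ with $N=n+2m+2$ contains a monochromatic $S(n,m)$ when $1\le m\le n\le 1.699(m+1)$. Suppose not; then neither the red graph $G$ nor its complement contains $S(n,m)$, so by the criterion every red (resp.\ blue) edge joining a vertex of degree $\ge n+1$ to one of degree $\ge m+1$ must have its two neighborhoods covering at most $n+m+1$ vertices. Passing to the graphon limit and normalizing $n\approx\alpha N$, $m\approx\beta N$ with $\alpha+2\beta\approx1$, the degree thresholds become edge-densities from a labeled root (which flag algebras handle via rooted types) and the overlap condition becomes a two-rooted flag constraint. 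I would then set up a semidefinite program that bounds the density of this forbidden degree-and-overlap configuration in $G$ and $\bar G$ simultaneously, and show these constraints cannot hold together once $n\le 1.699(m+1)$. Combining this with the construction above, and noting $n+2m+2\ge 2n+2$ throughout the range (as $n\le 2m$), yields $r(S(n,m);2)=\max\{2n+2,\,n+2m+2\}=n+2m+2$.

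I expect the upper bound to be the genuine obstacle. The constant $1.699$ is not the value of any clean combinatorial extremal quantity but the numerical optimum of the associated SDP, and the extremal configurations near the threshold are non-trivial mixtures rather than the single bipartite pattern governing the lower bound. This is precisely why a hands-on combinatorial argument stalls around $n\le\sqrt2\,m$ (the range already covered by \cref{t:dstar1}), and why the flag-algebra machinery, together with an exact rounding of the numerical certificate, appears necessary to push the threshold out to $1.699(m+1)$.
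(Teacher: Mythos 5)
This statement is quoted from Norin, Sun and Zhao~\cite{NSZ16}; the paper contains no proof of it, so there is nothing internal to compare your argument against, and your proposal has to stand on its own. It does not. The only portion that is actually proved is the construction showing $r(S(n,m);2)\ge n+2m+2$ (red graph $K_{n+m+1}\sqcup K_m$), and that is the easy, classical part already subsumed by \cref{t:dstar1}; your embeddability criterion for $S(n,m)$ (edge between the centers, degree conditions, and $|N(u)\cup N(v)|\ge n+m+2$) is correct and cleanly stated, but it is only used to verify this elementary bound. The two substantive claims of the theorem are left as research plans. For the lower bounds $\tfrac56m+\tfrac53n+o(m)$ and $\tfrac{21}{23}m+\tfrac{189}{115}n+o(m)$ you say you ``would search over colorings built from a fixed small pattern graph'' and that a ``small fractional optimization'' yields the stated coefficients; without exhibiting the pattern, the blow-up, and the verification that neither color contains $S(n,m)$, nothing is proved --- the specific constants are exactly the content of the result. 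For the exact value when $n\le 1.699(m+1)$ you correctly identify that a flag-algebra/SDP argument is what Norin--Sun--Zhao use, but ``set up a semidefinite program and show the constraints cannot hold together'' is a description of a method, not a proof: the entire difficulty lies in producing and rigorously rounding the positive-semidefinite certificate, and the constant $1.699$ is the output of that computation, not an input you can assume.

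Two smaller points. First, your closing observation that $n+2m+2\ge 2n+2$ ``throughout the range (as $n\le 2m$)'' is false for small $m$: e.g.\ $m=1$, $n=3$ satisfies $n\le 1.699(m+1)$ but $n>2m$, and then $\max\{2n+2,n+2m+2\}=2n+2\ne n+2m+2$; the inequality $1.699(m+1)\le 2m$ only holds for $m\ge 6$, so this step needs either the restriction to large $m$ or a separate check. Second, be careful with the ``if and only if'' framing of your embeddability criterion when you pass to the graphon limit: the overlap condition $|N(u)\cup N(v)|\ge n+m+2$ is a two-rooted statement, and encoding it as a flag constraint at the required precision is itself nontrivial. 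In short, the proposal correctly identifies the architecture of the Norin--Sun--Zhao argument but proves only the part that was already known.
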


 The fact that   $S(n,m)$ contains $K_{1, n+1}$ as a subgraph yields   the following proposition. 

\begin{prop}\label{p:stardouble} For all $k\ge 2$ and $n\ge m\ge1$, 
\[\rl(S(n,m);k) \ge \rl(K_{1, n+1};k) \text{ and } r(S(n,m);k) \ge r(K_{1, n+1};k).\]
\end{prop}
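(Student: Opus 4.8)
The plan is to deduce both inequalities from a single monotonicity principle: if a graph $H'$ is a subgraph of $H$, then $r(H';k)\le r(H;k)$ and $\rl(H';k)\le \rl(H;k)$. First I would verify the relevant containment. Writing $u$ for the center of the $K_{1,n}$ and $v$ for the center of the $K_{1,m}$ in $S(n,m)$, the vertex $u$ is adjacent to its $n$ leaves together with $v$, so the edges incident to $u$ induce a copy of $K_{1,n+1}$. Hence $K_{1,n+1}\subseteq S(n,m)$, as observed in the paragraph preceding the statement. This holds for every $n\ge m\ge1$, so there is no restriction on the parameter range.

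The key structural remark driving the argument is that any monochromatic copy of $H$ in an edge-coloring of $K_N$ automatically contains a monochromatic copy of every subgraph $H'$ of $H$: restricting the embedding of $H$ to the subgraph $H'$ yields a copy of $H'$ all of whose edges carry the same color. I would record this once and then apply it in two settings.

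For the ordinary Ramsey number, set $N=r(S(n,m);k)$. By definition every $k$-coloring of $E(K_N)$ contains a monochromatic $S(n,m)$, hence, by the remark, a monochromatic $K_{1,n+1}$; therefore $N\ge r(K_{1,n+1};k)$, which is the second claimed inequality. For the list Ramsey number I would run the same argument while carrying along a witnessing list assignment. Let $N=\rl(S(n,m);k)$ and take $L:E(K_N)\to\binom{\NN}{k}$ such that every $L$-coloring of $K_N$ contains a monochromatic $S(n,m)$. By the remark, the very same $L$ forces a monochromatic $K_{1,n+1}$ under every $L$-coloring, so $L$ witnesses $\rl(K_{1,n+1};k)\le N$, giving the first inequality.

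There is essentially no hard step here; the content is a routine monotonicity of the (list) Ramsey numbers under passing to subgraphs. The only point needing a little care is the list case: one must observe that the list assignment chosen to force $S(n,m)$ can be reused verbatim to force its subgraph $K_{1,n+1}$, rather than re-optimizing over list assignments. I would therefore keep the quantifiers in the correct order (there \emph{exists} a list assignment such that \emph{for all} $L$-colorings the target appears) so that this reuse is manifestly valid.
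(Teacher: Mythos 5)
Your proposal is correct and matches the paper's reasoning: the paper justifies the proposition with the single observation that $S(n,m)$ contains $K_{1,n+1}$ as a subgraph (the center of the $K_{1,n}$ has degree $n+1$), and relies on the same monotonicity of $r(\cdot;k)$ and $\rl(\cdot;k)$ under subgraphs that you spell out. Your care with the list case --- reusing the same witnessing list assignment $L$ rather than re-quantifying --- is exactly the point that makes the list inequality go through.
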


The main purpose of our paper is to investigate the lower and upper bounds for the $k$-color Ramsey number for double stars (see \cref{s:doublestar}). We prove the following main result.      \medskip

\begin{thm}\label{t:main1}
Let    $k\ge1$ and $n\ge m\ge 1$ be integers satisfying $(n+1)\cdot \left\lceil\frac{n+1}{k-1}\right\rceil> m((k-1)n+m)$.

\begin{enumerate}[(a)]

\item If  $k $ is odd, then 
$ r(S(n,m); k) = kn+m+2$.
 
\item  If   $k $ is even, then  $ \max\{kn+1, (k-1)n+2m+2\}\le r(S(n,m); k) \le kn+m+2$. 
\end{enumerate}
  \end{thm}
   
   A classic result of Erd\H{o}s and  Graham~\cite{EG73} from 1975 asserts that $r(T;k)>k(n-1)+1$  for every tree $T$ with $n\ge 1$ edges and  $k$ sufficiently large such that    $n$  divides $k-1$; in particular, this holds when $T=S(n,m)$. It follows that \cref{t:main1} does not hold for all such $k, n$.   However,   \cref{t:main1} can be extended to subdivided stars $S_n^m$.  \cref{t:main2} below is the second main result     in this paper.

\begin{thm}\label{t:main2}
Let   $k\ge 2$, $n\ge2$ and  $n\ge m\ge 1$ be integers  satisfying $  t>m $ and $nt > (t-m)(m-1)t +m((n-1)(k-1) +m) $, where $t=\left\lceil (n-m+1)/(k-1)\right\rceil$. 

\begin{enumerate}[(a)]

\item If  $k $ is odd, then 
$ r(S_n^m; k) = k(n-1)+m+2$.
 
\item  If   $k $ is even, then  $ \max\{k(n-1)+1, (k-1)(n-1)+2m+2\}\le r(S_n^m; k) \le k(n-1)+m+2$. 
\end{enumerate}
 
\end{thm}

    Our proofs of \cref{t:main1} and \cref{t:main2} are short and utilize  a folklore double counting argument in a set system,  the edge chromatic number of complete graphs,  and a result of K\"onig~\cite{Konig31} from 1931 on the cardinality of maximum matchings and minimum vertex covers of bipartite graphs (only for \cref{t:main2}). 
\medskip
  
  This paper is organized as follows. In the next section, we investigate lower and upper bounds for $r(S(n,m); k)$, and prove \cref{t:main1}. 
  In  \cref{s:substar}, we prove \cref{t:main2}. In
\cref{s:cr},  we present   our observations on the list Ramsey number for stars, double stars and subdivided stars.\medskip

We end this section by introducing more notation.   
Throughout the paper,  we use $(G,\tau)$ to  denote   a $k$-edge-colored  complete graph using colors in $[k]$, where $G$ is a complete graph and $\tau: E(G)\rightarrow [k]$ is a $k$-edge-coloring of $G$ that is not necessarily proper. We say $(G,\tau)$ is \dfn{$H$-free} if $G$ does not contain a monochromatic copy of  a graph $H$ under  the $k$-edge-coloring $\tau$.      
For  two disjoint sets $A, B\subseteq V(G)$,   we  say that  $A$ is \dfn{blue-complete}     to $B$    if all the edges between $A$ and $B$  in $(G,\tau)$ are colored  blue.  We say a vertex $x\in V(G)$ is \dfn{blue-adjacent} to a vertex $y\in V(G)$ if the edge $xy$ is colored blue in $(G,\tau)$. Similar definitions hold when blue is replaced by another color.
Given a graph $H$,    sets $S\subseteq V(H)$ and $F\subseteq E(H)$,  we use   $|H|$    to denote  the  number
of vertices    of $H$,      $H\less S$ the subgraph    obtained from $H$ by deleting all vertices in $S$,   $H\less F$ the subgraph    obtained from $H$ by deleting all edges in $F$,  $H[S] $    the  subgraph    obtained from $H$ by deleting all vertices in $V(H)\less S$, and $H[F]$  the subgraph of $H$ with vertex set $V(H)$ and edge set $F$.  We simply write $H\less v$ when $S=\{v\}$, and $H\less uv$ when $F=\{uv\}$.   We use the convention   ``$A:=$'' to mean that $A$ is defined to be the right-hand side of the relation.  For a positive integer $r$, a graph $H$ is an  \dfn{$r$-factor} of a graph $G$ if $H$ is an $r$-regular subgraph of $G$ such that $V(H)=V(G)$. The \dfn{chromatic index} or \dfn{edge chromatic number} of a graph $G$ is denoted by $\chi'(G)$.

\section{Bounds for $r(S(n,m); k)$}\label{s:doublestar}

In this section, we study lower and upper bounds for $r(S(n,m); k)$.

\subsection{Lower   bounds for $r(S(n,m); k)$}

We begin with lower bound constructions  for $r(S(n,m); k)$ using the chromatic index of complete graphs. In particular, our construction given in   \cref{t:lowbd}(a) is quite simple and nice. 

\begin{thm}\label{t:lowbd}
Let    $k\ge 1$ and $n\ge m\ge 1$  be integers. 
\begin{enumerate}[(a)]

\item  If   $k$ is odd, then  $r(S(n,m); k) \ge kn+m+2$. 
 
\item  If   $k $ is even, then  $r(S(n,m); k) \ge \max\{kn+1, (k-1)n+2m+2\}$. 
\end{enumerate}
\end{thm}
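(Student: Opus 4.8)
Both statements are lower bounds, so the plan is to exhibit, for each, an explicit $S(n,m)$-free $k$-edge-colouring of the relevant complete graph. Throughout I will use three elementary ``avoidance'' facts about a single colour class $G_c$, viewed as a graph: (i) if $\Delta(G_c)\le n$ then $G_c$ is $S(n,m)$-free, because the center of the $K_{1,n}$ in $S(n,m)$ has degree $n+1$; (ii) any graph on at most $n+m+1$ vertices is $S(n,m)$-free, since $|S(n,m)|=n+m+2$; and (iii) the complete bipartite graph $K_{n,n}$ is $S(n,m)$-free, again because all its degrees equal $n<n+1$. The whole argument reduces to building colourings whose colour classes are unions of pieces of these three safe types.

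\textbf{Part (a).} The key structural input is that, since $k$ is odd, $K_{k+1}$ has $k+1$ (even) vertices and hence $\chi'(K_{k+1})=k$; fix a proper $k$-edge-colouring of $K_{k+1}$ whose colour classes are perfect matchings $M_1,\dots,M_k$. I then take a blow-up of $K_{k+1}$ on $kn+m+1$ vertices: replace $k$ of the vertices by sets $V_1,\dots,V_k$ of size $n$ and the last vertex by a special set $V_{k+1}$ of size $m+1$, colour each cross-edge between $V_i$ and $V_j$ by the colour of $ij$, and colour the internal edges of each $V_i$ ($i\le k$) by the colour $c_i$ of the edge joining $i$ to the special vertex, placing the internal edges of $V_{k+1}$ in colour $c_1$. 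The point is that the $c_i$ are distinct (those $k$ edges share the special vertex), so each colour class consists of several $K_{n,n}$ blocks coming from matched regular pairs, which are safe by (iii), together with exactly one block on $V_i\cup V_{k+1}$ of size $n+(m+1)=n+m+1$, which is safe by (ii). This shows $r(S(n,m);k)\ge kn+m+2$.

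\textbf{Part (b).} For the bound $kn+1$ (valid for every $k$, in particular even $k$), I would take a proper edge-colouring of $K_{kn}$ into $kn-1$ perfect matchings and group them into $k$ colour classes each containing at most $n$ matchings; then every class has $\Delta\le n$ and is safe by (i). For the bound $(k-1)n+2m+2$ I exploit that $k-1$ is \emph{odd}: apply Part~(a) with $k-1$ colours to obtain an $S(n,m)$-free $(k-1)$-colouring of $K_{(k-1)n+m+1}$, then adjoin a set $Z$ of $m$ fresh vertices. I colour every edge between $Z$ and the old vertices in the new $k$-th colour, so that colour class is a complete bipartite graph whose large side has degree exactly $m$; since a small center of $S(n,m)$ needs degree $m+1$, no embedding can place both centers on the only side of high degree, which is independent, so it is safe. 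The $\binom{m}{2}$ edges inside $Z$ are scattered into colours $1,\dots,k-1$, where they form components of size at most $m<n+m+2$, safe by (ii) and disjoint from the old part. Taking the larger of the two constructions gives the claimed maximum.

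\textbf{Main obstacle.} The delicate step is colouring the ``leftover'' edges (the internal edges of the blow-up in Part~(a), and the extra colour in Part~(b)) without simultaneously creating, in one colour, a vertex of degree $\ge n+1$ adjacent to a vertex of degree $\ge m+1$. The naive fix for Part~(b) of joining a size-$(2m+1)$ special block to a size-$n$ block in one colour fails precisely because $K_{n,2m+1}$ \emph{does} contain $S(n,m)$ when $n\le 2m$; the resolution above sidesteps this by keeping that colour class bipartite with the large side of degree only $m$. In Part~(a) the analogous danger is neutralised purely by the size bound $n+m+1<n+m+2$, which is why the special block has size exactly $m+1$ and why oddness of $k$ (forcing $\chi'(K_{k+1})=k$ and leaving an even set of regular blocks to pair off) is used throughout.
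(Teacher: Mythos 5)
Your proposal is correct and follows essentially the same route as the paper: both parts rest on the same blow-up of a proper $k$-edge-colouring of a complete graph (your $K_{k+1}$ with perfect matchings is the standard equivalent of the paper's $K_k$ with near-perfect matchings, and your special block $V_{k+1}$ of size $m+1$ plays exactly the role of the paper's set $A$), while part (b) likewise adjoins a set of $m$ new vertices joined to the rest in a fresh colour on top of the $(k-1)$-colouring from part (a). The only cosmetic differences are that you obtain the $kn+1$ bound directly from a $1$-factorization of $K_{kn}$ rather than quoting the Burr--Roberts star bound, and that you scatter the edges inside $Z$ over colours $1,\dots,k-1$ where the paper places them all in colour $1$.
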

\begin{proof}   To prove (a), it suffices to provide a $k$-edge-coloring $\tau:E(G)\rightarrow [k]$ for the complete graph $G:=K_{kn+m+1}$ such that $(G, \tau)$ is $S(n,m)$-free. This is trivial when $k=1$ by  coloring all edges of $G$ by color $1$. We may assume that $k\ge3$.   Let $H:=K_k$ with $V(H):=\{v_1, \ldots, v_k\}$,  and let $c:E(H)\rightarrow [k]$ be a proper $k$-edge-coloring of $H$. This is possible because $\chi'(K_k)=k$ when $k\ge3$ is odd.  For each $i\in[k]$,   let $c(v_i)$ be the unique color in $[k]$ that does not appear on the edges incident with $v_i$ under the coloring $c$. Then    $c(v_i)\ne c(v_j)$ for  $1\le i<j\le k$. We may assume that $c(v_i)=i$ for each $i\in[k]$.      We now obtain a $k$-edge-coloring $\tau: E(G)\rightarrow [k]$ for $G$ as follows:  first partition $V(G)$ into  $A, V_1, \ldots, V_k$   such that $|A|=m+1$ and $|V_i|=n$ for all $i\in[k]$; then color all  edges of  $G[V_i]$ and all edges between $V_i$ and $A$  by  color $i$ for each $i\in[k]$,     all  edges  between $V_i$ and $V_j$ by  color $c(v_iv_j)$ for  $1\le i<j\le k$, and all  edges of $G[A]$ by color $1$. It is straightforward  to check that $(G, \tau)$ is $S(n,m)$-free, and so $r(S(n,m); k) \ge kn+m+2$, as desired.  This proves (a).
\medskip

  To prove (b), we first observe that $r(S(n,m); k) \ge r(K_{1, n+1}; k)\ge kn+1$ by \cref{t:stars}. We next  show that  $r(S(n,m); k) \ge   (k-1)n+2m+2$. Let $G:=K_{(k-1)n+2m+1}$. We now obtain a $k$-edge-coloring $\tau: E(G)\rightarrow [k]$ for $G$ as follows: first partition $V(G)$ into      $A, B, V_1, \ldots, V_{k-1}$  such that $|A|=m+1$, $|B|=m$, and $|V_i|=n$ for all $i\in[k-1]$. Let $G^*:=G\less B$. Note that $k-1$ is odd and $G^*=K_{(k-1)n+m+1}$. Let  $\tau^*:E(G^*)\rightarrow [k-1]$ be the $(k-1)$-edge-coloring of $G^*$ as constructed in the proof of (a). 
Let $\tau$ be obtained from $\tau^*$ by coloring all edges between $B$ and $V(G)\less B$ by color $k$, and all  edges of $G[B]$ by color $1$.  It is simple to check that $(G,\tau)$ is $S(n,m)$-free, and so  $r(S(n,m); k) \ge   (k-1)n+2m+2$, as desired.   
\end{proof}

 When $k$ is even and  sufficiently large (as a function of  $n+m+1$), we can improve the   bound further in \cref{t:lowbd}(b). We need  the following results of Petersen~\cite{Petersen} on the existence of $2$-factors of regular graphs, and   of Zhang and Zhu~\cite{ZZ92} on $1$-factors of regular graphs. 
 
  \begin{thm}[Petersen~\cite{Petersen}]\label{t:2-factor}
 Every regular graph of positive even degree has a $2$-factor.
\end{thm}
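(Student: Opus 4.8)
The plan is to prove Petersen's theorem by reducing the existence of a $2$-factor to the existence of a perfect matching in an auxiliary bipartite graph, where a classical matching theorem applies. Suppose $G$ is regular of degree $2k$ with $k\ge 1$. Since a $2$-factor of $G$ can be assembled from $2$-factors of its connected components, I would first pass to a single connected component, which is itself $2k$-regular and in particular has all degrees even and positive.

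Because every vertex has even degree, each component admits an Eulerian circuit. Traversing this circuit and orienting every edge in the direction it is traversed yields an orientation of $G$ in which each vertex is entered and left the same number of times; as the total degree at each vertex is $2k$, every vertex then has in-degree exactly $k$ and out-degree exactly $k$. This balanced orientation is the key device that converts the problem into a bipartite one.

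Next I would form the bipartite \emph{split} graph $B$ with parts $X=\{x_v:v\in V(G)\}$ and $Y=\{y_v:v\in V(G)\}$, placing an edge $x_uy_v$ in $B$ whenever the oriented edge $(u,v)$ is present in $G$. By construction the degree of $x_u$ in $B$ equals the out-degree of $u$ and the degree of $y_v$ equals the in-degree of $v$, so $B$ is $k$-regular and bipartite. Every $k$-regular bipartite graph satisfies Hall's condition (equivalently, by König's edge-coloring theorem its edge set decomposes into $k$ perfect matchings), hence $B$ has a perfect matching $M$.

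Finally I would translate $M$ back to $G$: the matching selects, for each vertex $u$, exactly one outgoing edge and, for each vertex $v$, exactly one incoming edge, so the corresponding set of edges of $G$ meets every vertex in exactly two edges and forms a $2$-regular spanning subgraph, i.e.\ a $2$-factor. Because $G$ is simple and only one of the two orientations of any edge is present, $M$ cannot force a $2$-cycle, so the resulting subgraph is a genuine $2$-factor. The main obstacle here is really the setup rather than a hard estimate: one must recognize the Eulerian orientation as the mechanism that balances in- and out-degrees and thereby makes $B$ regular; once $B$ is regular, the existence of the perfect matching is immediate from standard bipartite matching theory, and the reverse translation is routine.
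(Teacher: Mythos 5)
Your proof is correct: the Eulerian orientation of each component balances in- and out-degrees at $k$, the bipartite split graph $B$ is then $k$-regular and hence has a perfect matching by Hall's (or K\"onig's) theorem, and pulling the matching back selects exactly one in-edge and one out-edge at each vertex, which are distinct undirected edges since only one orientation of each edge is present. This is the standard argument for Petersen's theorem; the paper cites the result from Petersen's 1891 paper without reproving it, so there is no in-paper proof to compare against, but your write-up is a complete and accurate rendition of the classical proof.
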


\begin{thm}[Zhang and Zhu~\cite{ZZ92}]\label{t:factor}
 Every $r$-regular graph of order $2n$ contains at least $\lfloor r/2\rfloor$ edge-disjoint $1$-factors if $r\ge n$.
\end{thm}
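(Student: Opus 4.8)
The plan is to extract the $\lfloor k/2\rfloor$ factors one at a time, so the heart of the matter is a single perfect-matching existence result: \emph{every $d$-regular graph $G$ on $2n$ vertices with $d\ge n$ has a perfect matching.} I would prove this from Tutte's $1$-factor theorem by a degree count. If $G$ had no perfect matching, there would be $S\subseteq V(G)$ with (number of odd components of $G-S$) $\ge |S|+2$, using the parity of $2n$; writing $s=|S|$, every component $C$ of $G-S$ satisfies $|C|\ge d-s+1$, because each vertex of $C$ sends all $d$ of its edges into $C\cup S$. One first rules out $s\ge d$: then $G-S$ would have at least $s+2\ge n+2$ vertices, yet only $2n-s\le n$ slots are available outside $S$. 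Hence $s\le d-1$ and each component has at least $d-s+1\ge 2$ vertices, so counting vertices gives $2n\ge s+(s+2)(d-s+1)=-s^2+sd+2d+2$; using $d\ge n$ the right-hand side is at least $2n+s(d-s)+2>2n$, a contradiction. This lemma is clean and is the one place the hypothesis $d\ge n$ is really used.

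Given the lemma, the obvious strategy is greedy: remove a perfect matching, note the residual graph is $(d-1)$-regular, and repeat. The trouble is the degree drop, since the lemma only licenses a removal while the current degree is at least $n$, so this yields only $k-n+1$ edge-disjoint factors, which is fewer than $\lfloor k/2\rfloor$ precisely when $n>\lceil k/2\rceil+1$. Worse, a careless choice can strand the process even when enough factors exist: taking $n$ odd and letting $G$ be two disjoint copies of $K_n$ joined by a perfect matching $M$ (an $n$-regular graph on $2n$ vertices with $k=n\ge n$), the single factor $M$ is a legitimate perfect matching, yet $G-M=K_n\cup K_n$ has two odd components and hence no perfect matching at all, although $G$ does contain $\lfloor n/2\rfloor$ edge-disjoint factors, obtained by spreading the cross edges of $M$ across many matchings. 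So the factors must be chosen globally rather than greedily, and this is where the real work lies.

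To select them globally I would pass to Edmonds' perfect-matching polytope. The uniform weighting $x_e=1/k$ on every edge meets the degree constraints by regularity, and it meets every odd-set constraint $\sum_{e\in\delta(U)}x_e\ge 1$ because the same counting as in the lemma shows $|\delta(U)|\ge k$ for each odd $U$ when $k\ge n$ (an odd clique-like $U$ attaining equality is exactly the obstruction seen in the two-clique example). Hence $\tfrac1k\mathbf{1}$ lies in the perfect-matching polytope, so $\mathbf{1}$ is a nonnegative combination of perfect-matching indicators of total weight exactly $k$; this is a \emph{fractional} $1$-factorization of weight $k$. It then remains to round this fractional $1$-factorization down to $\lfloor k/2\rfloor$ pairwise edge-disjoint \emph{integral} perfect matchings.

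The integral rounding is the main obstacle, and it is genuinely the crux: a $k$-regular graph need not be $1$-factorable (it can be Class $2$), so one cannot hope to realize all $k$ fractional factors integrally. The key leverage is that only \emph{half} of the total weight is required, so one is free to discard the awkward part of the fractional solution; I would try to convert the fractional factorization into $\lfloor k/2\rfloor$ disjoint matchings by a flow/augmenting argument, or equivalently by maintaining the matchability invariant of the lemma while removing factors in a balanced way (mirroring how the cross edges are spread in the example). An alternative route worth developing in parallel starts from Petersen's theorem (\cref{t:2-factor}): after removing one factor to make the degree even, decompose the remainder into $2$-factors and split the even cycles into perfect matchings, where the parallel obstruction is the odd cycles, which the density $k\ge n$ must be used to absorb. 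Either way, reconciling the global count with integrality, using that only $\lfloor k/2\rfloor$ factors are needed, is the step I expect to be hardest.
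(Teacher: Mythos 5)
Your preliminary steps are sound: the Tutte-based lemma (every $d$-regular graph on $2n$ vertices with $d\ge n$ has a perfect matching) is correct as computed, the two-cliques-joined-by-a-matching example rightly shows that greedy extraction can strand the process, and the verification that the uniform weighting $\tfrac1k\mathbf{1}$ lies in Edmonds' perfect-matching polytope (via $|\delta(U)|\ge |U|(k-|U|+1)\ge k$ for odd $|U|\le k$, and complementation otherwise) is also valid. But the proof stops exactly where the theorem begins. The statement to be proved \emph{is}, in essence, the rounding claim you defer: that under the hypothesis $k\ge n$ a fractional $1$-factorization of weight $k$ can be converted into $\lfloor k/2\rfloor$ pairwise edge-disjoint integral perfect matchings. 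None of the three strategies you sketch for this step --- a flow/augmenting argument, ``maintaining the matchability invariant \ldots in a balanced way,'' or decomposing via \cref{t:2-factor} and absorbing the odd cycles --- is developed beyond a phrase, and none is routine. In general the gap between fractional and integral perfect-matching packings is real: the Petersen graph has $\tfrac13\mathbf{1}$ in its perfect-matching polytope (fractional packing value $3$) yet contains no two edge-disjoint perfect matchings, since deleting any one leaves two $5$-cycles. So the passage from polytope membership to many disjoint matchings must invoke $k\ge n$ a second time in a structural way, and your argument never does so after establishing the fractional solution. What you have is a correct proof that at least one $1$-factor exists, plus a correct reformulation of the remaining problem --- not a proof of the theorem.

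For comparison: the paper itself does not prove this statement at all; it is imported as a black box from Zhang and Zhu~\cite{ZZ92}, and it is then applied in the proof of \cref{l:lowbd}. The original Zhang--Zhu argument is a substantial piece of factor theory for regular graphs and does not proceed through the matching polytope, so there is no short in-paper proof your attempt could be matching. Judged on its own terms, your write-up has a genuine gap at precisely the step you yourself identify as the crux.
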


 \begin{lem}\label{l:lowbd}
Let    $k\ge 1$ and $n\ge m\ge 1$    be integers such that      $k-1$ is divisible  by $n+m+1$.    If $n$ is even,  or  $m$ is odd, then 
\begin{align*}
r(S(n,m); k) \ge kn+m+2. 
\end{align*}
\end{lem}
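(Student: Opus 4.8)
The plan is to strengthen the construction of \cref{t:lowbd}(b) by exhibiting a single $S(n,m)$-free $k$-edge-coloring of $G := K_{N}$ with $N := kn+m+1$; since $S(n,m)$ has $n+m+2$ vertices, any such coloring witnesses $r(S(n,m);k)\ge N+1 = kn+m+2$. Using the divisibility hypothesis, write $k-1 = q(n+m+1)$ with $q\ge 1$, and set $s := n+m+1$ and $p := qn+1$. The arithmetic observation driving the construction is that
\[
N = kn+m+1 = (q(n+m+1)+1)n+m+1 = (n+m+1)(qn+1)=sp,
\]
so I can partition $V(G)$ into $p$ groups $B_1,\dots,B_p$, each of size exactly $s=n+m+1$.

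The coloring is then transparent. First I color every edge lying inside a group, i.e.\ all edges of $\bigcup_i G[B_i]$, with a single color, say color $k$; each resulting monochromatic component is a copy of $K_{n+m+1}$, which has fewer than $n+m+2$ vertices and is therefore $S(n,m)$-free. The remaining edges, those joining distinct groups, form the complete $p$-partite graph $K_{p\times s}$ with all parts of size $s$, which is regular of degree $(p-1)s = qns$. The heart of the argument is to decompose this graph into exactly $k-1=qs$ spanning $n$-regular subgraphs (an $n$-factorization) and to assign colors $1,\dots,qs$ to them. Because a graph of maximum degree at most $n$ contains no vertex of degree $n+1$, and the large center of $S(n,m)$ has degree $n+1$, each such $n$-factor is $S(n,m)$-free; together with color $k$ this gives an $S(n,m)$-free $k$-coloring of $G$.

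Everything thus reduces to producing an $n$-factorization of $K_{p\times s}$, and here the parity hypotheses enter. If $n$ is even, then $qns$ is even, so repeated application of Petersen's theorem (\cref{t:2-factor}) peels off $qns/2$ edge-disjoint $2$-factors, and grouping them $n/2$ at a time yields the required $qs$ many $n$-factors. If instead $n$ is odd, the hypothesis forces $m$ odd, whence $N=sp$ is even and $K_{p\times s}$ has even order; I then extract $qs$ edge-disjoint perfect matchings, $2$-factorize the leftover $qs(n-1)$-regular graph (note $n-1$ is even) into $qs$ many $(n-1)$-factors, and adjoin one perfect matching to each to obtain $qs$ many $n$-factors. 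Deleting a $1$-factor or $2$-factor from a regular graph leaves a regular graph of degree smaller by $1$ or $2$, so the regularity bookkeeping after each peeling step is routine.

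The main obstacle is guaranteeing enough edge-disjoint perfect matchings in the odd case, and I expect to supply them via the theorem of Zhang and Zhu (\cref{t:factor}): as $K_{p\times s}$ is $qns$-regular of order $sp$ and $qns\ge sp/2$ (equivalently $qn\ge 1$), it contains at least $\lfloor qns/2\rfloor$ edge-disjoint $1$-factors, and since $s\ge 3$ gives $qs\ge 3$, this count exceeds the required $qs$ whenever $n\ge 3$. The only degenerate case is $n=m=1$, where $S(1,1)=P_4$; there the claimed bound is very weak and can be checked directly (or by a $1$-factorization of $K_{p\times 3}$). It is worth noting that the excluded case $n$ odd, $m$ even genuinely breaks the construction: then $N$ is odd, so $K_{p\times s}$ admits no odd-regular spanning subgraph, which is consistent with the hypothesis ruling it out.
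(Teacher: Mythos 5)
Your construction is essentially the same as the paper's: partition $V(K_{kn+m+1})$ into $n+m+1$-sized groups, put color $k$ on the within-group edges, and $n$-factorize the complete multipartite remainder via Petersen's theorem (\cref{t:2-factor}) when $n$ is even, and via $1$-factors from Zhang--Zhu (\cref{t:factor}) plus a $2$-factorization of the leftover when $n$ is odd. The bookkeeping ($qns$-regularity of $K_{p\times s}$, the degree condition $qns\ge sp/2$ for \cref{t:factor}, and the count of $1$-factors needed) all checks out, and your explicit treatment of the degenerate case $n=m=1$ is a point the paper glosses over (its odd case starts at $n\ge 3$).

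There is one genuine gap in the odd case: you assert that $n$ odd and $m$ odd force $N=kn+m+1$ to be even, but this is false in general --- with $n$ and $m$ both odd, $N\equiv kn\equiv k\pmod 2$, so $N$ is even if and only if $k$ is even. (Equivalently, $p=qn+1$ is even iff $q$ is odd iff $k=q(n+m+1)+1$ is even.) When $k$ is odd your matching extraction cannot start, since $K_{p\times s}$ then has odd order and admits no perfect matching. The fix is exactly the first line of the paper's proof: when $k$ is odd the bound $r(S(n,m);k)\ge kn+m+2$ already follows from \cref{t:lowbd}(a), so one may assume $k$ is even before running the construction; you should make that reduction explicit. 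The same caveat applies to your closing remark about $n$ odd, $m$ even ``breaking'' the construction --- there too the parity of $N$ depends on $k$, not only on $n$ and $m$.
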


\begin{proof}
By \cref{t:lowbd}(a), we may assume that $k$ is even. Let  $k:=(n+m+1)\ell+1$ for some integer $\ell\ge1$. Then   $k\ge n+m+2$ and $N := kn+m+1  =(n+m+1)(n\ell+1)$. Let $G:=K_{N}$ and let $\{V_1, \ldots, V_{n\ell+1}\}$ be a partition of $V(G)$ such that $|V_i|=n+m+1$ for all $i\in[n\ell+1]$. Let $H$ be obtained from $G$ by deleting all edges in $G[V_i]$ for each $i\in[n\ell+1]$.  Then $H$ is $(k-1)n$-regular on $N$ vertices. 
We next show that   $E(H)$ can be partitioned into $E_1,   \ldots, E_{k-1}$ such that $ H[E_i] $ is an $n$-factor of $H$ for all $i\in [k-1]$. \medskip

Assume first  $n $ is even. By repeatedly applying \cref{t:2-factor} to $H$,  we see that $E(H)$ can be partitioned into $E_1,   \ldots, E_{k-1}$ such that $ H[E_i] $ is an $n$-factor of $H$ for all $i\in [k-1]$. Assume next $n\ge3$ is odd. Then   $m$ is odd by assumption. Note that    $N=kn+m+1$ is even because $k$ is even;    $(k-1)n\ge N/2$ because $k\ge n+m+2$; in addition,  $(k-1)n/2\ge k-1$ because $n\ge3$. By \cref{t:factor},      $H $ contains at least $k-1$  edge-disjoint $1$-factors, say $F_1,   \ldots, F_{k-1}$. Let $H^*:= H\less \bigcup_{i=1}^{k-1}E(F_i)$. Note that $n-1$ is even and $H^*$ is $(k-1)(n-1)$-regular.  By repeatedly applying \cref{t:2-factor} to $H^*$,   we see that $E(H^*)$ can be partitioned into $E'_1,  \ldots, E'_{k-1}$ such that $H[E'_i] $ is an $(n-1)$-factor of $H$ for each $i\in [k-1]$. Let $E_i:=E'_i\cup F_i$ for each $i\in[k-1]$. Then 
$H[E_i] $ is an $n$-factor of $H$ for each $i\in [k-1]$.  \medskip

Now coloring  all  edges of $G[V_j]$ by color $k$ for each $j\in[n\ell+1]$, and all edges of   $E_i$ by color $i$ for each $i\in[k-1]$, we obtain a $k$-edge-coloring $\tau$ of $ G$ such that $(G,\tau)$ is $S(n,m)$-free.  Therefore, $r(S(n,m); k) \ge kn+m+2$. 
\end{proof}

The proof of \cref{l:lowbd1} is similar to the proof of \cref{l:lowbd}. We provide a proof here for completeness.

\begin{lem}\label{l:lowbd1}
Let  $k\ge 2$ and $n\ge m\ge 1$ be integers such that  $n$ is even, $m$ is odd, and  $k-1$  is divisible by $\frac{n+m+1}2$. Then 
\begin{align*}
r(S(n,m); k) \ge kn+m+2. 
\end{align*}
\end{lem}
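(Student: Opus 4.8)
The plan is to mimic the construction in the proof of \cref{l:lowbd}, exhibiting an $S(n,m)$-free $k$-edge-coloring of $K_{kn+m+1}$; the only genuinely new ingredient is a divisibility computation showing that the weaker hypothesis here still yields blocks of exactly the right size. By \cref{t:lowbd}(a) I may assume $k$ is even, since if $k$ is odd the desired bound is already given. As $n$ is even and $m$ is odd, $n+m+1$ is even, so $\frac{n+m+1}{2}$ is an integer and I may write $k=\frac{n+m+1}{2}\ell+1$ for some integer $\ell\ge1$. A short computation then gives $N:=kn+m+1=(n+m+1)\left(\frac{n\ell}{2}+1\right)$, and because $n$ is even the factor $\frac{n\ell}{2}+1$ is a positive integer. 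Hence $N$ is divisible by $n+m+1$, and I partition $V(K_N)$ into $p:=\frac{n\ell}{2}+1$ blocks $V_1,\ldots,V_p$ each of size exactly $n+m+1$.

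Next, let $H$ be obtained from $G:=K_N$ by deleting all edges inside each block $V_i$. Then $H$ is regular of degree $N-(n+m+1)=(n+m+1)\cdot\frac{n\ell}{2}=n(k-1)$. Since $n$ is even, $H$ has positive even degree, so I can repeatedly apply \cref{t:2-factor} to peel off $2$-factors and thereby partition $E(H)$ into $E_1,\ldots,E_{k-1}$ with each $H[E_i]$ an $n$-factor of $H$, grouping $n/2$ of the $2$-factors into each $E_i$. I then define $\tau$ by coloring every edge inside a block by color $k$ and every edge of $E_i$ by color $i$ for $i\in[k-1]$. Each color class $i\in[k-1]$ is $n$-regular, hence has maximum degree $n<n+1$ and so cannot contain the degree-$(n+1)$ center of $S(n,m)$; color $k$ is a disjoint union of cliques $K_{n+m+1}$, each too small to contain the $(n+m+2)$-vertex graph $S(n,m)$. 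Therefore $(G,\tau)$ is $S(n,m)$-free on $kn+m+1$ vertices, giving $r(S(n,m);k)\ge kn+m+2$.

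The factorization step is routine and identical to the ``$n$ even'' case of \cref{l:lowbd}, using only Petersen's theorem; note that the $1$-factor result \cref{t:factor} is not needed here, precisely because $n$ is even. I expect the only point requiring real care to be the arithmetic of the first paragraph: verifying that the hypothesis ``$\frac{n+m+1}{2}$ divides $k-1$'', together with the parities $n$ even and $m$ odd, forces $N$ to be divisible by the full value $n+m+1$ (and not merely by $\frac{n+m+1}{2}$) while simultaneously forcing $\deg_H=n(k-1)$ exactly, so that the blocks of size $n+m+1$ and the $k-1$ many $n$-factors fit together with no leftover edges.
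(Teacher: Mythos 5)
Your proof is correct and follows essentially the same route as the paper's: write $k=\frac{n+m+1}{2}\ell+1$, partition $K_{kn+m+1}$ into $p=\frac{n\ell}{2}+1$ blocks of size $n+m+1$, and decompose the $n(k-1)$-regular complement of the blocks into $n$-factors via repeated applications of Petersen's theorem. Your added verification that the resulting coloring is $S(n,m)$-free (bounded degree in colors $1,\dots,k-1$, small cliques in color $k$) is a detail the paper leaves implicit, and your observation that \cref{t:factor} is not needed here is accurate.
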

\begin{proof} Let $k:=\frac{n+m+1}2\ell+1$ for some integer $\ell\ge1$.  Then $N := kn+m+1=  (n+m+1)(\frac{n\ell}2+1)$. Let      $p:=\frac{n\ell}2+1$. Then $p$ is a positive integer because $n$ is even. Let $G:=K_{N}$ and let $\{V_1, \ldots, V_p\}$ be a partition of $V(G)$ such that $|V_i|=n+m+1$ for all $i\in[p]$. Let $H$ be obtained from $G$ by deleting all edges in $G[V_i]$ for each $i\in[p]$.  Then $H$ is $(k-1)n$-regular on $N $ vertices. Note that $(k-1)n$ is even. By repeatedly applying \cref{t:2-factor} to $H$,  we see that $E(H)$ can be partitioned into $E_1,   \ldots, E_{k-1}$ such that $ H[E_i] $ is an $n$-factor of $H$ for each $i\in [k-1]$. We now obtain a $k$-edge-coloring $\tau$ of $G$ by  coloring  all  edges of $G[V_j]$ by color $k$ for each $j\in[p]$, and all edges of   $E_i$ by color $i$ for each $i\in[k-1]$. Then $(G,\tau)$ is $S(n,m)$-free, and so  $r(S(n,m); k) \ge kn+m+2$. 
\end{proof}

\subsection{Upper bounds for $r(S(n,m); k)$}
We next show that the lower bound in \cref{t:lowbd}(a) is sharp for all  $k\ge 3$ odd and $n $ sufficiently large.
We need Lemma~\ref{p(x)}. Its proof follows from a simple double counting argument and can be found in \cite[Proposition 1.7]{Juk11}.

\begin{lem}\label{p(x)} Let $\mathcal{F}$ be a family of subsets of some set $X$. For each $x\in X$,  we  define    $p(x)$  to be   the number of members of $\mathcal{F}$  containing $x$.  Then

\[\sum_{x\in X} p(x)=\sum_{F\in\mathcal{F}}|F|.\]
 
\end{lem}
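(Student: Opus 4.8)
The plan is to prove this identity by double counting the element--set incidences. First I would introduce the incidence set
\[
I := \{(x,F) : x\in X,\ F\in\mathcal{F},\ x\in F\},
\]
and then evaluate its cardinality $|I|$ in two different ways, once by grouping the pairs according to their first coordinate and once according to their second.

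Grouping by the first coordinate: for a fixed $x\in X$, the pairs $(x,F)\in I$ are precisely those $F\in\mathcal{F}$ with $x\in F$, and there are exactly $p(x)$ of these by the definition of $p(x)$. Summing over all $x\in X$ then gives $|I|=\sum_{x\in X}p(x)$. Grouping instead by the second coordinate: for a fixed $F\in\mathcal{F}$, the pairs $(x,F)\in I$ are precisely those $x\in X$ with $x\in F$, of which there are $|F|$ by definition of cardinality. Summing over all $F\in\mathcal{F}$ gives $|I|=\sum_{F\in\mathcal{F}}|F|$. Equating the two expressions for $|I|$ yields the claimed identity. Equivalently, one can write $\mathbf{1}[x\in F]$ for the indicator of the event $x\in F$, expand $\sum_{x\in X}p(x)=\sum_{x\in X}\sum_{F\in\mathcal{F}}\mathbf{1}[x\in F]$, and interchange the order of summation to reach $\sum_{F\in\mathcal{F}}\sum_{x\in X}\mathbf{1}[x\in F]=\sum_{F\in\mathcal{F}}|F|$; this is the same computation phrased via a swap of summation order rather than via the incidence set.

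I do not expect any genuine obstacle here, since this is the textbook ``count the incidences two ways'' argument and is exactly the folklore fact cited as \cite[Proposition 1.7]{Juk11}. The only care required is bookkeeping: one must check that both counting passes really range over the same set $I$, and one should note that the sums are well defined (for instance because $X$ is finite, or because all but finitely many summands vanish), so that the rearrangement is legitimate.
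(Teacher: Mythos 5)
Your proof is correct and is exactly the standard incidence double-counting argument that the paper itself invokes (it gives no proof of its own, simply citing the folklore fact as \cite[Proposition 1.7]{Juk11} and describing it as ``a simple double counting argument''). Nothing further is needed.
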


\begin{thm}\label{t:ubds}
Let   $k\ge 2$ and $n\ge m\ge 1$ be integers. If $(n+1)\cdot \left\lceil\frac{n+1}{k-1}\right\rceil> m((k-1)n+m)$, then  
\begin{align*}
r(S(n,m); k) \leq kn+m+2.
\end{align*}
\end{thm}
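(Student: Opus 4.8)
The plan is to argue by contradiction: suppose $(G,\tau)$ is an $S(n,m)$-free $k$-edge-colouring of $G:=K_{kn+m+2}$ and produce a violation of the hypothesis. Note first that no parity assumption on $k$ is used here; oddness is only needed for the matching lower bound in \cref{t:lowbd}(a). Since every vertex has degree $kn+m+1$, if all $k$ colour-degrees at a vertex were at most $n$ their sum would be at most $kn<kn+m+1$, so some vertex $v$ has a colour, say \emph{blue}, in which it has at least $n+1$ neighbours. I would fix a set $U\subseteq N_{\text{blue}}(v)$ with $|U|=n+1$ \emph{exactly} and set $R:=V(G)\setminus(U\cup\{v\})$, so that $|R|=(kn+m+1)-(n+1)=(k-1)n+m$ is pinned down precisely. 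The first key step is the local claim that every $u\in U$ has at most $m-1$ blue neighbours in $R$: otherwise $v$ (as big centre, using the $n$ leaves $U\setminus\{u\}$), the blue edge $vu$, and $u$ (as small centre, using $m$ blue neighbours inside $R$, which are automatically disjoint from $U\cup\{v\}$) would form a blue $S(n,m)$.

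From this claim each $u\in U$ sends at least $|R|-(m-1)=(k-1)n+1$ non-blue edges into $R$. Distributing these over the $k-1$ non-blue colours and applying pigeonhole, each $u\in U$ has some non-blue colour in which it has at least $\big\lceil ((k-1)n+1)/(k-1)\big\rceil=n+1$ neighbours inside $R$. A second pigeonhole over the $k-1$ non-blue colours then yields a single colour $j^*$ and a subset $U^*\subseteq U$ with $|U^*|\ge \big\lceil (n+1)/(k-1)\big\rceil$, every vertex of which has at least $n+1$ neighbours in $R$ in colour $j^*$. Counting the $j^*$-edges between $U^*$ and $R$ from the $U^*$ side via \cref{p(x)} gives the lower bound $(n+1)\big\lceil (n+1)/(k-1)\big\rceil$, which is exactly the left-hand side of the hypothesis.

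For the matching upper bound on the same quantity, the second local claim is that each $r\in R$ has at most $m$ neighbours in $U^*$ in colour $j^*$. Indeed, if $r$ had $j^*$-neighbours $u_0,u_1,\dots,u_m\in U^*$, I would take $u_0$ as big centre with $n$ leaves chosen among its at least $n+1$ $j^*$-neighbours in $R\setminus\{r\}$, the $j^*$-edge $u_0r$, and $r$ as small centre with leaves $u_1,\dots,u_m$; because the big centre's leaves lie in $R$ while $r$'s leaves lie in $U^*\subseteq U$, all chosen vertices are automatically distinct, giving a $j^*$-coloured $S(n,m)$. Counting the $j^*$-edges between $U^*$ and $R$ from the $R$ side via \cref{p(x)} then yields the upper bound $m|R|=m((k-1)n+m)$. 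Comparing the two counts forces $(n+1)\big\lceil (n+1)/(k-1)\big\rceil\le m((k-1)n+m)$, contradicting the hypothesis; hence $r(S(n,m);k)\le kn+m+2$.

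The part I expect to require the most care is the bookkeeping that makes both halves fit the condition exactly: taking $|U|=n+1$ rather than all of $N_{\text{blue}}(v)$ so that $|R|$ equals $(k-1)n+m$ on the nose, and, in the second local claim, insisting that the big centre draw its leaves from $R$ and the small centre from $U^*$ so that disjointness is free and the clique-like configurations that otherwise obstruct the extension cannot arise. The two extension claims are individually short, but their numerology (the $+1$'s in $(k-1)n+1$ and the ceilings) is what exactly produces the two sides $(n+1)\lceil (n+1)/(k-1)\rceil$ and $m((k-1)n+m)$ of the stated inequality, so I would verify those constants explicitly before compressing the argument.
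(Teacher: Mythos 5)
Your proposal is correct and follows essentially the same route as the paper's proof: extract a monochromatic $K_{1,n+1}$, observe each leaf has at most $m-1$ blue neighbours in the remainder (else a blue $S(n,m)$), pigeonhole twice to get $\lceil (n+1)/(k-1)\rceil$ same-coloured stars into $B$, and double-count to find a vertex of $B$ lying in at least $m+1$ of them. The only cosmetic differences are that you phrase the final double count as a contradiction on edge counts rather than invoking \cref{p(x)} to locate a heavy vertex $b^*$, and you justify the initial monochromatic star by a degree argument rather than via \cref{t:stars}.
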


\begin{proof}
Let $(G,\tau)$ be a complete, $k$-edge-colored $K_{kn+m+2}$ using colors in $\left[k\right]$. Then $G$ contains a monochromatic copy of $H:=K_{1,n+1}$, say in color $k$.  We may assume that the color $k$ is blue.  Let $A:=\{a_1, \ldots, a_{n+1}\}$ be the set of $n+1$ leaves of $H$, that is, the set of vertices of degree one in $H$,  and let $B := V(G)\setminus V(H)$.  Then $|A|=n+1$ and $|B|=(kn+m+2)-(n+2)=(k-1)n+m$. We may assume that  each vertex in  $ A$ is  blue-adjacent to at most  $m-1$    vertices  in $B$, otherwise we are done.  For each $a_i\in A$, let $E_i:=\{a_ib\mid b\in B \text{ and } \tau(a_ib)\ne k \}$.  Then $|E_i|\ge |B|-(m-1)= (k-1)n+1$, and all the edges in $E_i$ are colored using colors in $\left[k-1\right]$ under $\tau$. By the pigeonhole principle, each $a_i\in A$ is the center of a monochromatic copy of $H_i:=K_{1,n+1}$,   in  some color in $[k-1]$, with leaves in $B$.  Since $|A|=n+1$, we see that    at least $t:=\lceil(n+1)/(k-1)\rceil$  of $H_1, H_2, \ldots, H_{n+1}$, say  $H_1, H_2, \ldots,  H_t$,    are colored the same by some   color in   $ [k-1]$. We may further assume that  $H_1, H_2, \ldots, H_t$ are in color red. Let $L_i$ be the set of  leaves of $H_i$ for each $i\in[t]$.  Let  $\mathcal{F} := \{L_1,   \ldots, L_{t}\}$. For $b\in B$,  let $p(b)$ be defined as in Lemma~\ref{p(x)}.  Let $b^*\in B$ such that  $p(b^*)$ is maximum. By Lemma~\ref{p(x)} and the choice of $n,m,k$, we have 
\[((k-1)n+m)\cdot p(b^*)= |B|\cdot p(b^*)\ge \sum_{b\in B} p(b)=\sum_{L\in\mathcal{F}}|L|= (n+1)\cdot \left\lceil\frac{n+1}{k-1}\right\rceil>m((k-1)n+m). \]
It  follows that $p(b^*)\ge m+1$. We may further assume that $b^*\in L_1\cap \cdots\cap L_{m+1}$.   Then $(G,\tau)$ contains  a red copy of  $S(n,m)$   with its edge set $E(H_{m+1})\cup\{b^*a_i\mid i\in [m]\}$, as desired.  
\end{proof}

 Note that $ r(S(n,m); 1) = n+m+2$. Combining this with \cref{t:lowbd} and \cref{t:ubds}
leads to  our main result,  \cref{t:main1}.
For  all $k\ge3$ and $m=1$, we can improve    the bound for $n$ in \cref{t:ubds}.  \cref{l:ubm=1} follows from the proof of  \cref{t:ubds}. We provide a proof here for completeness.

\begin{lem}\label{l:ubm=1}
Let $k\geq 3$ and $n\geq (k-1)(k-2)$ be integers. Then $r(S(n,1); k) \leq kn+3$.
\end{lem}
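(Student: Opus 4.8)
The plan is to mimic the proof of \cref{t:ubds} but exploit the special structure when $m=1$ to relax the hypothesis on $n$. Here we want to conclude $r(S(n,1);k)\le kn+3$, so we start with a complete, $k$-edge-colored $G:=K_{kn+3}$ using colors in $[k]$. Since $kn+3 = k(n+1)-(k-3) \ge k((n+1)-1)+2$, the graph $G$ contains a monochromatic copy of $K_{1,n+1}$; say it is blue, with center $v$, leaf set $A=\{a_1,\dots,a_{n+1}\}$, and $B:=V(G)\setminus(A\cup\{v\})$, so $|A|=n+1$ and $|B|=(kn+3)-(n+2)=(k-1)n+1$. To find a blue copy of $S(n,1)$ it would suffice to have a single leaf $a_i$ that is blue-adjacent to some vertex of $B$; hence I would assume no such edge exists, i.e.\ every $a_i\in A$ sends only non-blue edges to $B$. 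Then each $a_i$ is joined to all of $B$ by edges colored in $[k-1]$, and since $|B|=(k-1)n+1$, by pigeonhole each $a_i$ is the center of a monochromatic $K_{1,n+1}$ in some color of $[k-1]$ with all leaves in $B$.

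The key refinement for $m=1$ is that we do not need the heavy double-counting bound $(n+1)\lceil(n+1)/(k-1)\rceil > m((k-1)n+m)$; instead we only need two of these stars to share a single common leaf in $B$. As in \cref{t:ubds}, among $H_1,\dots,H_{n+1}$ at least $t:=\lceil(n+1)/(k-1)\rceil$ of them, say $H_1,\dots,H_t$, receive the same color (say red), with leaf sets $L_1,\dots,L_t\subseteq B$, each of size $n+1$. I would now argue that if $L_1,\dots,L_t$ were pairwise disjoint, then $|B|\ge t(n+1)$; but with the hypothesis $n\ge(k-1)(k-2)$ one checks $t(n+1) = \lceil(n+1)/(k-1)\rceil\,(n+1) > (k-1)n+1 = |B|$, a contradiction. (Concretely, $\lceil(n+1)/(k-1)\rceil \ge (n+1)/(k-1)$, and $n\ge(k-1)(k-2)$ forces $(n+1)^2/(k-1) > (k-1)n+1$; this is exactly where the numerical hypothesis enters, replacing the $m=1$ instance of the \cref{t:ubds} condition.) Hence two of the leaf sets, say $L_i$ and $L_j$, intersect in some vertex $b^*\in B$.

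With such a shared leaf $b^*\in L_i\cap L_j$, the red star $H_i$ centered at $a_i$ together with the single red edge $b^*a_j$ yields a red copy of $S(n,1)$: the center $a_i$ has the $n$ red leaves $L_i\setminus\{b^*\}$ plus $b^*$ for $K_{1,n}$-part appropriately arranged, and $b^*$ carries the extra red pendant edge to $a_j$, giving the $K_{1,1}$ attached at the adjacent center. (One should verify $a_j\notin L_i$ and $a_i\ne a_j$ so that the $n+1$ vertices $\{a_i\}\cup L_i$ together with $a_j$ are genuinely distinct — this holds since $A$ and $B$ are disjoint and $L_i\subseteq B$.) This produces the required monochromatic $S(n,1)$, completing the proof.

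The step I expect to be the main obstacle is the intersection count: verifying cleanly that $n\ge(k-1)(k-2)$ implies $\lceil(n+1)/(k-1)\rceil\,(n+1) > (k-1)n+1$, so that the $t$ red leaf sets cannot be pairwise disjoint. The ceiling makes the inequality slightly delicate for small residues of $n+1$ modulo $k-1$, so I would handle the bound by writing $t\ge (n+1)/(k-1)$ and checking that the resulting quadratic inequality $(n+1)^2 > (k-1)((k-1)n+1)$ follows from $n\ge(k-1)(k-2)$; any edge cases where the ceiling is needed only strengthen the inequality. The rest is a routine adaptation of the argument in \cref{t:ubds}, building the double star from two stars sharing one leaf.
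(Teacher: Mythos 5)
Your overall strategy is exactly the paper's: find a blue $K_{1,n+1}$, assume no blue edge joins its leaf set $A$ to $B$, use pigeonhole to get many monochromatic stars from $A$ into $B$ in a common color, observe that two of them must share a leaf, and splice the shared leaf into a double star. However, the numerical verification you single out as the crux is wrong as written. You claim that $n\ge(k-1)(k-2)$ forces $(n+1)^2/(k-1)>(k-1)n+1$, so that the crude bound $t\ge (n+1)/(k-1)$ already suffices. This fails precisely at and near the boundary: for $k=3$ and $n=(k-1)(k-2)=2$ you would need $9/2>5$, and for $k=4$ the inequality $(n+1)^2>3(3n+1)$ fails for $n=6$ and $n=7$ even though the hypothesis allows $n\ge 6$. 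Your remark that ``any edge cases where the ceiling is needed only strengthen the inequality'' has the logic backwards: the ceiling is not a safety margin here, it is the whole point of the hypothesis.

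The correct (and the paper's) use of the hypothesis is integral from the start: since $n+1\ge(k-1)(k-2)+1$, you get
\[
t=\left\lceil\frac{n+1}{k-1}\right\rceil\ge (k-2)+\left\lceil\frac{1}{k-1}\right\rceil=k-1,
\]
and then $t(n+1)\ge(k-1)(n+1)=(k-1)n+k-1>(k-1)n+1=|B|$ because $k\ge3$. So among the $t\ge k-1$ red stars, each with $n+1$ leaves in $B$, two must share a leaf, and the rest of your argument (including the assembly of the red $S(n,1)$ from $a_i$, $b^*$, $L_i\setminus\{b^*\}$ and $a_j$) goes through. With that one correction your proof coincides with the paper's.
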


\begin{proof}
Let $(G,\tau)$ be a complete, $k$-edge-colored $K_{kn+3}$ using colors in $\left[k\right]$.    Let $v\in V(G)$. Then $v$ is the center of  a monochromatic copy of   $H:=K_{1,n+1}$, say  in  color $k$.  Let $A:=\{v_1, v_2, \ldots, v_{n+1}\}$ be the leaves of $H$.  
Let $B := V(G)\setminus \{v, v_1, \ldots v_{n+1}\}$.  We may   assume that no edge between $A$ and $B$ is  colored by color $k$, otherwise we are done.  Thus all the edges between $A$ and $B$ are colored using the colors in  $\left[k-1\right]$. Note  that $|A|=n+1\ge(k-1)(k-2)+1$ and  $|B|=(k-1)n +1$.  It follows that each $v_i$ is the center of a monochromatic copy of $H_i:=K_{1,n+1}$,   in  some color in $[k-1]$, with leaves in $B$;  at least $\lceil |A|/(k-1)\rceil\ge k-1$ of such stars $H_1, \ldots, H_{n+1}$ are colored by the same color in $[k-1]$, say in color red; and at least   two of such $k-1$ red stars $K_{1,n+1}$ share   one leaf in common, since $k\ge3$. Therefore, $(G,\tau)$ contains a red copy of $S(n,1)$, as desired.
\end{proof}

\begin{cor}\label{c:m=1}
Let $k\geq 3$   and $n\geq (k-1)(k-2)$ be integers. 
\begin{enumerate}[(a)]
\item If $k$ is odd, then  $r(S(n,1); k) = kn+3$. In particular, $r(S(n,1); 3)=3n+3$ for all $n\ge1$. 
\item If both $k$ and $n$ are even, then $kn+2  \leq r(S(n,1); k) \leq kn +3$.
\item If $k$ is  even and $n$ is odd,   then $kn+1 \leq r(S(n,1); k) \leq kn +3$.
\end{enumerate}
\end{cor}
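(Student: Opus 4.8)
The plan is to assemble this corollary entirely from results already established, since the upper bound in all three parts is exactly \cref{l:ubm=1}, which gives $r(S(n,1);k)\le kn+3$ under the standing hypotheses $k\ge3$ and $n\ge (k-1)(k-2)$. Thus the whole problem reduces to supplying matching or near-matching lower bounds, tailored to the parities of $k$ and $n$.

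For part (a), I would invoke \cref{t:lowbd}(a) with $m=1$: when $k$ is odd it yields $r(S(n,1);k)\ge kn+3$, which together with the upper bound forces equality. The only point needing separate care is the ``in particular'' claim for $k=3$, where the asserted range is $n\ge1$ rather than $n\ge(k-1)(k-2)=2$. For $n\ge2$ the general argument applies verbatim; for the leftover case $n=1$ I would simply quote \cref{t:ramseyP4}, noting $S(1,1)=P_4$ and $r(S(1,1);3)=6=3\cdot1+3$.

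For parts (b) and (c), I would route the lower bound through \cref{p:stardouble}, which gives $r(S(n,1);k)\ge r(K_{1,n+1};k)$, and then read off $r(K_{1,n+1};k)$ from the Burr--Roberts bounds in \cref{t:stars} applied to the star $K_{1,n+1}$ (which has $n+1$ edges). The crucial input is the tightness clause: the lower bound $k\bigl((n+1)-1\bigr)+1=kn+1$ is attained \emph{exactly} when both $n+1$ and $k$ are even. In case (b), with $k$ even and $n$ even, $n+1$ is odd, so the lower bound is not tight and hence $r(K_{1,n+1};k)=kn+2$, giving $r(S(n,1);k)\ge kn+2$. In case (c), with $k$ even and $n$ odd, I only need the Burr--Roberts lower bound $r(K_{1,n+1};k)\ge kn+1$, which holds regardless of parity.

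I expect the main (and only) subtlety to be the parity bookkeeping that separates (b) from (c): one must translate the parity of $n$ into the parity of $n+1$ before invoking the tightness condition of \cref{t:stars}, and notice that \cref{t:lowbd}(b) alone is too weak here---for $m=1$ its bound $\max\{kn+1,(k-1)n+4\}$ collapses to $kn+1$ once $n\ge4$, so the improvement to $kn+2$ in part (b) genuinely requires the exact star Ramsey value rather than the double-star construction. Everything else is direct substitution into the cited results.
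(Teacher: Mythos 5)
Your proposal is correct and follows essentially the same route as the paper: the upper bound in all three parts comes from \cref{l:ubm=1}, part (a)'s lower bound from \cref{t:lowbd}(a) (with the $n=1$, $k=3$ case settled by \cref{t:ramseyP4}), and parts (b) and (c) from $r(S(n,1);k)\ge r(K_{1,n+1};k)$ combined with the Burr--Roberts bounds and their tightness condition applied to $K_{1,n+1}$. Your parity bookkeeping (translating the parity of $n$ into that of $n+1$ before invoking \cref{t:stars}) is exactly what the paper does implicitly.
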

 
 \begin{proof}
If $k\ge3$ is odd, then  $r(S(n,1); k) =  kn+3$ by \cref{t:lowbd} and \cref{l:ubm=1}. By \cref{t:ramseyP4}, $r(S(1,1); 3) = 6$, and so  $r(S(n,1); 3)=3n+3$ for all $n\ge1$.  Next, if $k$  is even,  by \cref{t:stars} and \cref{l:ubm=1}, we see that  $kn+2 = r(K_{1,n+1}; k) \leq r(S(n,1); k) \leq kn+3$  if $n$ is even, and $kn+1= r(K_{1,n+1}; k) \leq r(S(n,1); k) \leq kn +3$  if $n$ is odd. 
\end{proof}

 \section{Bounds for $r(S_n^m; k)$}\label{s:substar}
 In this section we prove \cref{t:main2}.  
Recall that  $S_n^m$ denotes the graph obtained from $K_{1, n}$ by subdividing $m$ edges each exactly once, where   $n\ge2$ and  $n\ge m\ge1$. Note that $S_n^1=S(n-1, 1)$,  $S_2^1=S(1, 1)=P_4$,  and $r(S^m_n; k)\ge r(K_{1,n}; k)$ for all $k\ge2$. 
 \cref{t:substar-low} below follows directly from the  proof of  \cref{t:lowbd} by letting  $|A|=m+1$ and  $|V_1|=\cdots=|V_k|=n-1$ when $k$ is odd, and letting   $|A|=m+1$, $|B|=m$ and $|V_1|=\cdots=|V_{k-1}|=n-1$  when $k$ is even. We omit the proof here.  
 
\begin{thm}\label{t:substar-low}
Let $k\ge2$,  $n\geq 2$ and $n\ge m\geq 1$ be integers. \begin{enumerate}[(a)]

\item If  $k $ is odd, then 
$ r(S_n^m; k) \ge k(n-1)+m+2$.
 
\item  If   $k $ is even, then  $ r(S_n^m; k) \ge \max\{k(n-1)+1, (k-1)(n-1)+2m+2\}$. 
\end{enumerate}

\end{thm}

 Our proof of \cref{t:substar-up}   follows the main idea in the proof of \cref{t:ubds} but more involved. We need both    \cref{p(x)} and   a  result of K\"onig from 1931.  Note that our second main result \cref{t:main2} follows from \cref{t:substar-low} and \cref{t:substar-up}.
 
\begin{thm}[K\"onig~\cite{Konig31}]\label{t:mmmc}
Let $G$ be a bipartite graph. Then the  maximum cardinality of a matching in $G$ is equal to the minimum cardinality of a vertex cover in $G$. 
\end{thm}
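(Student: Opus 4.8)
The plan is to establish the two inequalities $\nu(G)\le\tau(G)$ and $\tau(G)\le\nu(G)$ separately, where I write $\nu(G)$ for the maximum cardinality of a matching and $\tau(G)$ for the minimum cardinality of a vertex cover. The inequality $\nu(G)\le\tau(G)$ is the easy half and in fact holds for every graph: any vertex cover must contain at least one endpoint from each edge of a maximum matching $M$, and since the edges of $M$ are pairwise disjoint these endpoints are distinct, so every vertex cover has at least $|M|$ vertices. The substance of the theorem is the reverse inequality, and my strategy is to exhibit, starting from an arbitrary maximum matching $M$, a specific vertex cover whose size is exactly $|M|$; bipartiteness will enter precisely here.

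To build this cover I would use the alternating-path technique. Fix the bipartition $V(G)=X\cup Y$ and a maximum matching $M$, and let $U\subseteq X$ be the set of $M$-unsaturated vertices in $X$. Let $Z$ denote the set of all vertices reachable from $U$ along $M$-alternating paths (paths whose edges alternate between $E(G)\setminus M$ and $M$, beginning with a non-matching edge), and set
\[C:=(X\setminus Z)\cup(Y\cap Z).\]
The maximality of $M$ yields two structural facts that I would record first: every vertex of $Y\cap Z$ is $M$-saturated and is matched into $X\cap Z$ (an alternating path reaching an unsaturated $Y$-vertex would be augmenting, contradicting maximality), and every vertex of $X\setminus Z$ is $M$-saturated (since all unsaturated $X$-vertices lie in $U\subseteq Z$).

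The two things left to check are that $C$ covers every edge and that $|C|\le|M|$, and I expect the covering claim to be the main obstacle. For covering, I would take an edge $xy$ with $x\in X$ and $y\in Y$; the only dangerous case is $x\in X\cap Z$, and here the parity of the alternating path reaching $x$ forces the edge $xy$ either to extend that path or to already lie on it, so that $y\in Z$ and hence $y\in Y\cap Z\subseteq C$. Making this parity bookkeeping airtight—ruling out any edge from $X\cap Z$ to $Y\setminus Z$—is the crux of the whole argument, and is exactly where bipartiteness is essential, since in the presence of odd cycles a vertex could be reached by alternating paths of both parities. For the size bound, the two structural facts show that every vertex of $C$ is $M$-saturated and that no edge of $M$ has both endpoints in $C$ (a matching edge from $X\setminus Z$ to $Y\cap Z$ would force its $X$-endpoint into $Z$, while edges within $X$ or within $Y$ do not exist), so $C$ meets each edge of $M$ in at most one vertex and $|C|\le|M|$. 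Chaining $\tau(G)\le|C|\le|M|=\nu(G)\le\tau(G)$ then collapses all quantities to a common value and finishes the proof.
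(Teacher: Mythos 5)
The paper imports this statement as a classical result of K\"onig and supplies no proof of its own, so there is no internal argument to compare yours against. Your write-up is the standard alternating-path proof and it is correct: the easy inequality $\nu(G)\le\tau(G)$ is handled properly; the parity analysis showing that no edge can join $X\cap Z$ to $Y\setminus Z$ (the edge either extends the alternating path or is the matching edge along which the path arrived at $x$) is exactly the point where bipartiteness is used and is sound; and the size bound $|C|\le|M|$ follows as you say, since every vertex of $C$ is $M$-saturated and no matching edge has both endpoints in $C$, so mapping each vertex of $C$ to its matching edge is injective. No gaps.
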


\begin{thm}\label{t:substar-up}
Let   $k\ge 2$, $n\ge2$ and  $n\ge m\ge 1$ be integers and let $t=\left\lceil (n-m+1)/(k-1)\right\rceil$. If $  t>m $ and $nt > (t-m)(m-1)t +m((k-1)(n-1) +m)  $, then 
\[r(S_n^m; k) \leq k(n-1)+m+2.\]
\end{thm}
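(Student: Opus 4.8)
The plan is to mimic the structure of the proof of \cref{t:ubds}, but to account for the fact that $S_n^m$ has a more complicated shape than $S(n,m)$: it is a star $K_{1,n}$ in which $m$ of the edges have been subdivided. First I would take $(G,\tau)$ to be a complete $k$-edge-colored $K_{k(n-1)+m+2}$ using colors in $[k]$. Since $k(n-1)+m+2 \ge k(n-1)+2 \ge r(K_{1,n};k)$ (the lower bound coming from \cref{t:stars}), $G$ contains a monochromatic copy of $K_{1,n}$, say in a color I will call blue. Let $v$ be its center and $A=\{a_1,\dots,a_n\}$ its $n$ leaves, and set $B := V(G)\setminus(\{v\}\cup A)$, so $|B| = (k(n-1)+m+2)-(n+1) = (k-1)(n-1)+m$. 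The idea is then to try to grow $m$ of the leaves $a_i$ into paths of length two using edges colored by the non-blue colors, reaching into $B$, which is exactly what subdividing $m$ edges of the star requires.

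Next I would argue that we may assume each $a_i$ sends at most $m-1$ blue edges into $B$ (otherwise $v$, $a_i$, a blue $K_{1,m}$ hanging off $a_i$ inside $B$, and the remaining $n-1$ blue leaves from $A$ already furnish a blue $S_n^m$). With that assumption, each $a_i$ is joined to at least $|B|-(m-1) = (k-1)(n-1)+1$ vertices of $B$ by edges colored in $[k-1]$, so by pigeonhole each $a_i$ is the center of a monochromatic $K_{1,n}$ with leaves in $B$ in some color from $[k-1]$. Since $|A|=n$, at least $t=\lceil(n-m+1)/(k-1)\rceil$ of these stars share a common color; here I should double-check the exact counting, since the relevant quantity is $\lceil n/(k-1)\rceil$ versus $\lceil(n-m+1)/(k-1)\rceil$, and the difference must be reconciled by the fact that I only need $n-m+1$ leaves per star (the endpoints of the $m$ subdivided edges can coincide partly with the pool), so I would set up the star sizes carefully to land on exactly $t$ monochromatic stars in a common color, say red, with leaf-sets $L_1,\dots,L_t \subseteq B$, each of size at least $n-m+1$.

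With $\mathcal{F}=\{L_1,\dots,L_t\}$ in hand, I would apply the double counting \cref{p(x)} together with the hypothesis $nt > (t-m)(m-1)t + m((n-1)(k-1)+m)$ to extract structure. Roughly, the count $\sum_{b\in B}p(b)=\sum_i |L_i|$ is large, and I want to conclude that enough vertices of $B$ lie in many of the $L_i$ simultaneously, so that I can pick $m$ distinct vertices $b_1,\dots,b_m \in B$ and assign each $b_j$ as the subdivision vertex on a red path $a_{i}\,b_j\,a_{i'}$ — i.e.\ realize the red $S_n^m$ by choosing one red star as the ``core'' $K_{1,n}$ centered at some $a_i$ and routing $m$ of its edges through distinct midpoints in $B$ that are red-adjacent back to $A$. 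The condition $t>m$ ensures there are enough candidate stars to play the role of the two endpoints of each subdivided edge while keeping the midpoints distinct. This is where \cref{t:mmmc} (K\"onig's theorem) enters: I expect to build an auxiliary bipartite graph encoding which midpoint vertices in $B$ can be assigned to which subdivided edges, and then argue that either a matching of size $m$ exists (giving the red $S_n^m$ directly) or a small vertex cover exists, which via the counting inequality leads to a contradiction with the hypothesis. The main obstacle, and the step I expect to be most delicate, is precisely this matching-versus-cover dichotomy: I must translate the purely numerical hypothesis $nt > (t-m)(m-1)t + m((n-1)(k-1)+m)$ into the statement that no vertex cover of size less than $m$ can block the assignment, ensuring by K\"onig that a full system of $m$ distinct red midpoints exists. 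Verifying that the quantities $(t-m)$, $(m-1)t$, and $m((n-1)(k-1)+m)$ match up exactly with the sizes of the relevant vertex parts and the maximum per-vertex multiplicity $p(b)$ is the heart of the argument and will require the careful bookkeeping that I would postpone to the formal write-up.
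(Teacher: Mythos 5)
Your plan breaks at the very first reduction. You claim that if some leaf $a_i$ has $m$ blue neighbors in $B$, then $v$, the blue star at $v$, and a blue $K_{1,m}$ hanging off $a_i$ give a blue $S_n^m$. That is false for $m\ge 2$: the resulting tree has one vertex ($a_i$) of degree $m+1$ attached to the center, whereas $S_n^m$ needs $m$ \emph{distinct} leaves of the $K_{1,n}$, each extended by exactly one pendant edge to $m$ \emph{distinct} new endpoints. So the correct "otherwise we are done" condition is the existence of a blue \emph{matching} of size $m$ between $A$ and $B$, not a single leaf of high blue degree. This is exactly where the paper invokes K\"onig's theorem (\cref{t:mmmc}), at the start rather than at the end: form the bipartite graph $G^*$ of blue $A$--$B$ edges; if it has a matching of size $m$ you are done, and otherwise it has a vertex cover $C$ with $|C|\le m-1$. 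Deleting $C$ leaves $A'=A\setminus C$ with $|A'|\ge n-m+1$ and $B'=B\setminus C$ with $|B'|\ge (n-1)(k-1)+1$, and \emph{no} blue edges between them. This also resolves the discrepancy you flagged between $\lceil n/(k-1)\rceil$ and $t=\lceil (n-m+1)/(k-1)\rceil$: the $n-m+1$ comes from shrinking $A$ to $A'$, not from shrinking the stars to $n-m+1$ leaves (each $H_i$ is still a full $K_{1,n}$ with leaves in $B'$, which pigeonhole delivers since $|B'|\ge (n-1)(k-1)+1$).

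Your endgame is also misaligned with where the work actually happens. The paper does not need a second matching/cover dichotomy to place the midpoints: it defines $L_i^*=\{x\in L_i : p(x)\ge m+1\}$, and the double count of \cref{p(x)} against the hypothesis $nt>(t-m)(m-1)t+m((n-1)(k-1)+m)$ (splitting $\sum_{b}p(b)$ over $B^*=\bigcup_i L_i^*$, where $p\le t$, and its complement, where $p\le m$) forces some $|L_j^*|\ge m$. Taking $b_1,\dots,b_m\in L_1^*$, each lies in at least $m$ of the sets $L_2,\dots,L_t$ (using $p(b_i)\ge m+1$ and $t>m$), so one can greedily assign them to distinct stars and obtain the red $S_n^m$ with edge set $E(H_1)\cup\{b_1a_2,\dots,b_ma_{m+1}\}$. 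Without the initial K\"onig step your argument does not produce a valid monochromatic $S_n^m$ in the blue case and does not produce the quantity $t$ in the red case, so as written the proof does not go through.
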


\begin{proof}
Let $(G,\tau)$ be a complete, $k$-edge-colored $K_{k(n-1)+m+2}$ using colors in $\left[k\right]$. Then $G$ contains a monochromatic copy of  $H:=K_{1,n}$, say in color $k$.  We may  assume that the color $k$ is blue.  Let $A:=\{a_1, \ldots, a_{n}\}$ be the set of $n$ leaves of $H$ and let $B := V(G)\setminus V(H)$.  Then \[|B|=(k(n-1)+m+2)-(n+1)=(k-1)(n-1)+m.\]
 Let $G^*$ be the bipartite graph with   $V(G^*)=A\cup B$ and   $E(G^*)$ consisting of all blue edges between $A$ and $B$ in $G$ under the coloring $\tau$.    Then $G^*$ has no  matching of size $m$, otherwise we are done.    Let $C\subseteq V(G^*)$ be a minimum vertex cover of $G^*$. By \cref{t:mmmc}, $|C|\le m-1$. Let $A':=A\less C$ and $B':=B\less C$.
Then $|A'|\ge  n-(m-1)$ and $|B'|\ge (k-1)(n-1)+1$. Now all the edges  between $A'$ and $B'$  are colored using colors in $[k-1]$ under $\tau$. We may assume that $ a_1, \ldots, a_{n-m+1}\in A'$. By the pigeonhole principle, each $a_i\in A'$ is the center of a monochromatic copy of  $H_i:=K_{1,n}$,   in  some color in $[k-1]$, with leaves in $B'$.  Since $|A'|\ge n-m+1$, we see that   there are at least $t:=\left\lceil (n-m+1)/(k-1)\right\rceil>m$  of $H_1,   \ldots, H_{n-m+1}$, say  $H_1,   \ldots, H_t$,    are colored the same by some   color in   $ [k-1]$. We may further assume that  $H_1,   \ldots, H_t$ are in color red. Let $L_i$ be the set of  leaves of $H_i$ for each $i\in[t]$.  Let  $\mathcal{F} := \{L_1,   \ldots, L_{t}\}$. For $b\in B'$,  let $p(b)$ be   the number of members of $\mathcal{F}$ containing $b$.   For each $i \in [t]$, define $L^*_i = \{x\in L_i\mid p(x)\geq m+1\}$. We next show that $|L^*_j|\ge m$  for some $j\in[t]$.  
Suppose  $|L^*_i|\le m-1$ for each $i\in [t]$.  Let $B^*:=\bigcup_{i=1}^{t} L^*_i$.  
Then $|B^*|\le (m-1)t$. Note that $p(b)\le t$ for each $b\in B^*$,   $p(b)\le m$ for each $b\in B'\less B^*$, and $|B'|\le|B|= (k-1)(n-1) +m$. It follows that  \medskip
\begin{align*}
\sum_{b\in B'}p(b)&= \sum_{b\in B^*}p(b) + \sum_{b\in B'\setminus B^*}p(b)\\
&\le t|B^*|+m(|B'|-|B^*|)\\
&=(t-m)|B^*|+m|B'|\\
&\le (t-m)(m-1)t+m((k-1)(n-1)+m).
\end{align*}
However, by   Lemma~\ref{p(x)}, we have 
\[\sum_{b\in B'}p(b)=\sum_{F\in\mathcal{F}}|F|=\sum_{i=1}^t |L_i|=nt,\]
contrary to the assumption that   $nt > (t-m)(m-1)t +m((k-1)(n-1) +m)$. Thus   $|L^*_j|\ge m$ for some $j\in [t]$, say $j=1$. Let $b_1, \ldots,  b_m \in L^*_1$. Then  $p(b_i)\ge m+1$ for each $i\in [m]$.  By assumption, we have $t>m$.  We may further assume that $b_i\in L_{i+1}$ for each $i\in [m]$.   Then  $(G, \tau)$ contains a red copy of $S_n^m$ with its edge set $E(H_1)\cup\{b_1a_2,\ldots, b_ma_{m+1}\}$, as desired.
\end{proof}

\section{Some results on list Ramsey number}\label{s:cr}

As mentioned in the Introduction, our motivation of this paper is to determine whether  $\rl(S(n, m);k)$ and $r(S(n, m);k)$ are always equal. This seems far from trivial. We end this paper with our  observations towards $\rl(K_{1,n};p)$ and $\rl(S(1,1);p)$ for every odd prime $p$,   and $\rl(S(n, m);2)$ and $\rl(S_n^m;2)$. The authors of~\cite{ABKKS2019} proved the following important result on $\rl(K_{1,n};k)$. 
 
\begin{thm}[Alon, Buci\'c, Kalvari, Kuperwasser, and Szab\'o~\cite{ABKKS2019}]\label{t:liststars}
For any $k$ and $n\in \mathbb{N}$, except
    possibly finitely many integers $n$ for each   $k$, we have
    $\rl(K_{1,n}; k)= r(K_{1,n};k)$. More precisely,
\begin{itemize} 
\item[(a)] For every $k,n\in \mathbb{N}$, we have  $k(n-1) + 1\leq  \rl(K_{1,n};k)$.
In particular, if both $n$ and $k$ are even, then \[\rl(K_{1,n}; k) = k(n-1) +1= r(K_{1,n}; k).\]
\item[(b)]For every $k\in \mathbb{N}$ there exists $w(k)\in \mathbb{N}$
  such that the following holds. For every $k$ and $n \geq w(k)$
  that are not both even, we have $$\rl(K_{1,n};k) = k(n-1)+2 = r(K_{1,n};k).$$
\end{itemize}
\end{thm}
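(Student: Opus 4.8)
The plan is to split the equality $\rl(K_{1,n};k)=r(K_{1,n};k)$ into an upper bound, which is essentially free, and two lower bounds, which carry all the content. The key reformulation is that an $L$-coloring of $K_N$ contains no monochromatic $K_{1,n}$ exactly when every color class has maximum degree at most $n-1$; I will call such an $L$-coloring \emph{good}, so that a good $L$-coloring is precisely a list version of an $(n-1)$-edge-coloring (an ``$f$-edge-coloring'' with $f\equiv n-1$). By definition, $\rl(K_{1,n};k)>N$ if and only if $K_N$ admits a good $L$-coloring for \emph{every} assignment $L$ of size-$k$ lists. Since $\rl(K_{1,n};k)\le r(K_{1,n};k)$ and \cref{t:stars} gives $r(K_{1,n};k)=(n-1)k+1$ when $n,k$ are both even and $(n-1)k+2$ otherwise, the stated upper bounds are immediate. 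Thus it remains to prove the lower bounds: for (a), that $K_{(n-1)k}$ has a good $L$-coloring for every size-$k$ list assignment $L$; and for (b), that $K_{(n-1)k+1}$ does, when $n,k$ are not both even and $n\ge w(k)$. Combining each lower bound with the matching upper bound yields the two equalities.

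For (a), set $N=(n-1)k$, so every vertex has degree $(n-1)k-1$, exactly one less than the per-vertex budget of $k$ colors each used at most $n-1$ times. I would construct a good coloring greedily, coloring the edges in an arbitrary order while maintaining that each color class has maximum degree at most $n-1$. When an edge $uv$ has all colors of $L(uv)$ already saturated at $u$ or at $v$, I would free a color by a Kempe-type recoloring: choose a color saturated at one endpoint and a color available there, and swap the two colors along the alternating walk they span. Since at most $k-1$ colors can be saturated at either endpoint (only $(n-1)k-1$ edges meet it), the single unit of slack is enough to guarantee that a workable pair of colors, and a terminating alternating walk, always exist. This gives the universal bound $\rl(K_{1,n};k)\ge (n-1)k+1$, which with the upper bound settles the both-even equality of (a).

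For (b), set $N=(n-1)k+1$, so every vertex now has degree exactly $k(n-1)$ and the slack of the previous argument disappears. The hardest list assignments here are the (near-)constant ones, for which a good coloring is essentially a decomposition of $K_N$ into $k$ color classes of maximum degree $n-1$, i.e.\ an $(n-1)$-factorization; such a factorization of $K_N$ exists if and only if $N(n-1)=((n-1)k+1)(n-1)$ is even, which is exactly the hypothesis that $n,k$ are not both even. (This is the same parity obstruction that makes $r(K_{1,n};k)=(n-1)k+2$ rather than $(n-1)k+1$ in \cref{t:stars}.) The plan is to start from an ordinary good coloring, guaranteed by \cref{t:stars}, and transport it to an arbitrary size-$k$ list assignment by repairing list violations one at a time through alternating (two-color) recolorings; the hypothesis $n\ge w(k)$ is used to keep the color classes dense enough that every required alternating walk can be closed without introducing a new degree violation.

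The main obstacle is precisely this zero-slack regime of (b). With no degree to spare, a recoloring that removes one list violation may create another, so the difficulty is to prove termination rather than to perform a single local swap. I expect this to need a global device---a potential function counting total violations, or an auxiliary bipartite conflict graph on which a deficiency/K\"onig-type argument (in the spirit of \cref{t:mmmc}) can be run---and the threshold $w(k)$ should emerge quantitatively as the point past which the $(n-1)$-regular color classes are large enough for the repair process to always make progress. Conceptually this is a tight instance of the principle that lists of the ``right'' size are no worse than a fixed palette, a statement that is open for edge colorings in general but should be provable here thanks to the complete-graph structure, the parity condition, and $n$ being large. An alternative route I would keep in reserve is the Combinatorial Nullstellensatz: encode a good $L$-coloring as a nonvanishing assignment of a graph polynomial whose variables have degree less than $k$, and reduce (b) to showing that a specific coefficient is nonzero for all large $n$, with the parity condition guaranteeing nonvanishing.
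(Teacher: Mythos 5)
First, a point of reference: the paper you were given does not prove \cref{t:liststars} at all --- it is quoted from~\cite{ABKKS2019} as a known result --- so there is no in-paper proof to compare against, and I can only assess your argument on its own terms. Your skeleton is correct: the reformulation (an $L$-coloring is $K_{1,n}$-free iff every color class has maximum degree at most $n-1$), the derivation of the upper bounds from $\rl(K_{1,n};k)\le r(K_{1,n};k)$ together with \cref{t:stars}, and the identification of the parity obstruction for $(n-1)$-regular spanning subgraphs are all right. But the two lower bounds, which as you say carry all the content, are not actually proved.

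The fatal gap in (a) is the Kempe-type recoloring step. Swapping colors $a$ and $b$ along an alternating walk recolors edges currently colored $a$ with $b$ and vice versa, and there is no reason that the edges along the walk have the \emph{other} color in their lists; a swap that is legal for one edge is illegal for its neighbors. This is exactly the obstruction that makes list edge coloring hard (it is why Vizing-type fan and Kempe arguments do not prove the list edge coloring conjecture), and the ``one unit of slack'' does nothing to repair it: the slack bounds how many colors are saturated at an endpoint, not whether a swap respects lists. A secondary defect is that in the capacitated (non-proper) setting the union of two color classes has maximum degree up to $2(n-1)$, so ``the alternating walk they span'' is not even well defined. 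The known proof of (a) avoids Kempe chains entirely: split each color of each list into $n-1$ copies, so that a \emph{proper} edge coloring from the inflated lists of size $k(n-1)$ projects to a coloring with every color class of maximum degree at most $n-1$, and then invoke the nontrivial theorem of H\"aggkvist and Janssen that $\chi'_{\ell}(K_m)\le m$, applied with $m=k(n-1)$; this route through list edge chromatic numbers of complete graphs is also what the present paper alludes to when it says that \cref{t:oddprimestar} follows from the proof of \cref{t:liststars} together with \cref{t:oddprime}. As for (b), your text is a research plan rather than a proof: ``I expect this to need a global device,'' ``the threshold $w(k)$ should emerge quantitatively,'' and the reserve Combinatorial Nullstellensatz route is likewise undeveloped (which polynomial, which coefficient, and why is it nonzero?). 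The repair process you describe faces the same list-legality obstruction as in (a), now with zero slack, and note additionally that for non-constant lists a good coloring need not be an $(n-1)$-factorization, since more than $k$ colors can appear at a vertex; so your implicit reduction of (b) to ``(near-)constant assignments are the hardest'' is itself an unproved claim. In short: correct frame and correct target bounds, but the substance of the theorem remains unproven.
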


Following the proof of \cite[Lemma 11]{ABKKS2019}, one  can prove \cref{t:oddprimestar} below applying  \cref{t:oddprime}.  We recall  the  proof here for completeness.

\begin{thm}[Galvin~\cite{Galvin}]\label{t:bip} If $G$ is a bipartite graph, then  
$\chi'_{\ell}(G) \le \Delta(G)$.
\end{thm}
    \begin{thm}[Schauz~\cite{Schauz14}]\label{t:oddprime}
$\chi'_{\ell}(K_{p+1}) = p=\chi'(K_{p+1})$ for every odd prime $p$.
\end{thm}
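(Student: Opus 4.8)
The statement asks for $\chi'_{\ell}(K_{p+1}) = p$ for every odd prime $p$. Since $p+1$ is even, $K_{p+1}$ is Class~1 and decomposes into $p$ perfect matchings, so $\chi'(K_{p+1}) = p$; as $\chi'_{\ell}(G) \ge \chi'(G)$ for every graph $G$, the lower bound $\chi'_{\ell}(K_{p+1}) \ge p$ is immediate. The plan is therefore to establish the upper bound $\chi'_{\ell}(K_{p+1}) \le p$, which amounts to showing that the line graph $\Gamma := L(K_{p+1})$ is $p$-choosable.

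To prove choosability I would use the polynomial (Combinatorial Nullstellensatz) method. Assign a variable $x_e$ to each edge $e$ of $K_{p+1}$ and form the graph polynomial of $\Gamma$,
\[ P \;=\; \prod_{\{e,f\}\,:\, e\cap f \neq \emptyset} (x_e - x_f), \]
where for each adjacent pair the order of the difference is fixed by some reference orientation $D$ of $\Gamma$. A proper edge-coloring of $K_{p+1}$ from lists $L(e)$ of size $p$ is exactly a choice $x_e \in L(e)$ with $P \neq 0$. Since $\Gamma$ is $(2p-2)$-regular on $\binom{p+1}{2}$ vertices, $\deg P = |E(\Gamma)| = (p-1)\binom{p+1}{2}$, and with lists of size $p$ the only admissible monomial of top degree with each exponent at most $p-1$ is the diagonal monomial $\prod_e x_e^{p-1}$. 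Thus by the Combinatorial Nullstellensatz it suffices to show that the coefficient of $\prod_e x_e^{p-1}$ in $P$ is nonzero; by the Alon--Tarsi interpretation this coefficient equals $\pm(\mathrm{EE}(D) - \mathrm{OE}(D))$, the signed difference between the numbers of even and odd Eulerian sub-digraphs of an Eulerian orientation $D$ of $\Gamma$ (such an orientation exists because every vertex of $\Gamma$ has even degree $2p-2$, forcing all out-degrees to equal $p-1$).

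The crux is to show this coefficient does not vanish, and here I would reduce the computation modulo $p$ and exploit the primality. Working over the field $\mathbb{F}_p$, I would index the vertices of $K_{p+1}$ by $\mathbb{Z}_p \cup \{\infty\}$ and use the canonical $1$-factorization coming from the additive structure of $\mathbb{Z}_p$, so that the objects counted by the coefficient carry a $\mathbb{Z}_p$-action induced by translation of the labels. Orbits of full size $p$ contribute $0$ modulo $p$, so the coefficient reduces modulo $p$ to a signed count of the fixed points of this action, which can be evaluated directly from the field arithmetic and shown to be nonzero in $\mathbb{F}_p$. Equivalently, one expresses the relevant quantity as a permanent of an incidence-type matrix over $\mathbb{F}_p$ and computes it using the structure of $\mathbb{F}_p$; this is exactly the route taken through the quantitative form of the Combinatorial Nullstellensatz. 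A nonzero value modulo $p$ forces the integer coefficient to be nonzero, completing the upper bound and hence the equality $\chi'_{\ell}(K_{p+1}) = p$.

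The main obstacle is the nonvanishing of this diagonal coefficient. For a general even order $n$ the corresponding quantity for $K_n$ is not known to be nonzero (this is precisely the still-open List Edge Coloring Conjecture), and it is exactly the primality of $p$ -- through the $\mathbb{Z}_p$-symmetry and the arithmetic of $\mathbb{F}_p$ -- that makes the modular computation tractable and forces it to be nonzero. Every other step, namely the lower bound, the degree count, and the translation into a single coefficient condition, is routine.
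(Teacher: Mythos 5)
The paper does not prove this statement at all: it is quoted verbatim as an external result of Schauz~\cite{Schauz14} (``Proof of the list edge coloring conjecture for complete graphs of prime degree''), so there is no internal proof to compare yours against. Measured on its own terms, your proposal correctly handles everything routine --- the lower bound $\chi'_{\ell}\ge\chi'=p$ via the $1$-factorization of $K_{p+1}$, the reduction of $p$-choosability of $L(K_{p+1})$ to a single coefficient of the graph polynomial, the degree count showing $\prod_e x_e^{p-1}$ is the unique admissible top-degree monomial, and the Alon--Tarsi identification of that coefficient with $\pm(\mathrm{EE}(D)-\mathrm{OE}(D))$ for an Eulerian orientation $D$. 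You also correctly diagnose that the entire difficulty is the nonvanishing of this coefficient, and that primality must enter through a mod-$p$ reduction.

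The genuine gap is that this decisive step is only announced, never executed. Saying that the fixed points of the $\mathbb{Z}_p$-translation action ``can be evaluated directly from the field arithmetic and shown to be nonzero in $\mathbb{F}_p$'' is not a proof; it is precisely the content of Schauz's seventeen-page argument. In particular you have not verified that the action you describe actually preserves the set of Eulerian subdigraphs of your chosen orientation together with their parities (this requires choosing $D$ compatibly with the $\mathbb{Z}_p\cup\{\infty\}$ labeling, and the vertex $\infty$ and the edges through it are not moved freely by translation, so the orbit structure is not uniform), nor have you identified the fixed points, nor computed their signed count in $\mathbb{F}_p$. Without these computations the argument establishes nothing beyond what is already known for every even-order complete graph, where --- as you note yourself --- the same coefficient is conjectured but not known to be nonzero. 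As it stands the proposal is a correct roadmap to the literature rather than a proof; to close it you would either have to carry out the fixed-point computation in full or explicitly cite \cite{Schauz14} for the nonvanishing, which is what the paper does.
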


 \begin{thm}\label{t:oddprimestar}
For all $n\geq 2$ and every odd prime $p$, 
\[r_{\ell}(K_{1,n}; p) = r(K_{1,n};  p) = p(n-1)+2.\]
\end{thm}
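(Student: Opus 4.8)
The plan is to prove the statement $r_\ell(K_{1,n};p)=r(K_{1,n};p)=p(n-1)+2$ by combining the known value of the ordinary Ramsey number with a matching list-Ramsey lower bound derived from list edge-colorings. Since $p$ is odd, \cref{t:stars} gives $r(K_{1,n};p)\le p(n-1)+2$, and moreover the lower bound $p(n-1)+1\le r(K_{1,n};p)$ with equality only when both $n$ and $p$ are even means that for odd $p$ we have $r(K_{1,n};p)=p(n-1)+2$. Because $r_\ell(H;p)\le r(H;p)$ always holds, the entire problem reduces to establishing the lower bound $r_\ell(K_{1,n};p)\ge p(n-1)+2$, i.e.\ exhibiting a list assignment $L$ on $K_{p(n-1)+1}$ that admits a proper $L$-coloring with no monochromatic $K_{1,n}$ (equivalently, an $L$-coloring in which every color class has maximum degree at most $n-1$).

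First I would set $N:=p(n-1)+1$ and recall the structure behind the ordinary lower bound for stars: to avoid a monochromatic $K_{1,n}$ on $N$ vertices one wants each vertex to see each color at most $n-1$ times, so every color class should be roughly an $(n-1)$-regular graph, which is exactly a decomposition of $K_N$ into $p$ bounded-degree factors. The key enabling tool is \cref{t:oddprime}, which asserts that the list chromatic index of $K_{p+1}$ equals $p$ for every odd prime $p$; this is the list-coloring analogue of $\chi'(K_{p+1})=p$ that powered the construction in \cref{t:lowbd}(a). The strategy, following the proof of \cref{t:liststars}, is to build the list assignment $L$ on $K_N$ by partitioning $V(K_N)$ into $n-1$ blocks together with a small remainder and imposing on the ``block-structure'' a list edge-coloring of an auxiliary $K_{p+1}$; the $\chi'_\ell(K_{p+1})=p$ guarantee then supplies a proper list edge-coloring of the index graph, which lifts to an $L$-coloring of $K_N$ whose color classes each have maximum degree at most $n-1$.

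Concretely, I would mimic the coloring in the proof of \cref{t:lowbd}(a): partition $V(K_N)$ into classes $V_1,\dots,V_p$ each of size $n-1$ plus one extra vertex (since $N=p(n-1)+1$), color edges inside each $V_i$ and from the extra structure using a designated color, and color edges between $V_i$ and $V_j$ according to a proper list edge-coloring of the auxiliary complete graph on the index set. The crucial upgrade over \cref{t:lowbd}(a) is that instead of a single fixed proper edge-coloring $c$ of $K_p$ (or $K_{p+1}$), we are handed arbitrary lists of size $p$ on the edges, and \cref{t:oddprime} lets us still properly color; transferring this to a list assignment $L$ on $E(K_N)$ requires assigning to each edge of $K_N$ the appropriate list of $p$ candidate colors inherited from the auxiliary graph's edge-list, so that every resulting $L$-coloring has each color appearing at most $n-1$ times at each vertex. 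One then checks that no monochromatic $K_{1,n}$ can appear, since degree at most $n-1$ in each color precludes a monochromatic star with $n$ leaves.

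The main obstacle I anticipate is the faithful transfer of the list edge-coloring of the index graph into a list \emph{vertex}-assignment $L$ on $E(K_N)$ while simultaneously controlling monochromatic degrees at every vertex, including the ``remainder'' vertex arising because $N\equiv 1\pmod{n-1}$ is not a clean multiple of $n-1$. In the non-list setting one simply fixes one optimal coloring, but in the list setting the adversary chooses the lists and we must argue that \emph{some} legal $L$-coloring always yields bounded color-degree regardless; this is precisely where \cref{t:oddprime} must be invoked with care, and where the parity hypothesis (odd prime $p$) is essential, mirroring the role of $\chi'(K_{p+1})=p$ in \cref{t:lowbd}. Matching the leftover vertex to the block decomposition without creating an overfull color class is the delicate bookkeeping step, and it is the part I would verify most carefully before declaring the lower bound $r_\ell(K_{1,n};p)\ge p(n-1)+2$ complete.
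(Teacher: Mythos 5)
The paper itself gives no proof of this theorem: it only remarks that one follows the proof of \cref{t:liststars} from~\cite{ABKKS2019}, substituting Schauz's result (\cref{t:oddprime}) that $\chi'_{\ell}(K_{p+1})=p$. Your reduction is correct as far as it goes: since $p$ is odd, \cref{t:stars} gives $r(K_{1,n};p)=p(n-1)+2$, and $r_\ell\le r$ leaves only the lower bound $r_\ell(K_{1,n};p)\ge p(n-1)+2$ to prove. (One quantifier slip: this lower bound requires showing that \emph{every} list assignment $L$ on $K_{p(n-1)+1}$ admits an $L$-coloring in which each color has degree at most $n-1$ at each vertex, not that some $L$ does; you state it the wrong way round at first, although your later discussion of the ``adversary'' shows you know the correct quantifier.)

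The genuine gap is in the construction. You propose to mimic \cref{t:lowbd}(a): partition the \emph{vertex} set into blocks $V_1,\dots,V_p$ and color all edges between $V_i$ and $V_j$ by the color of the index edge $ij$ in a (list) edge coloring of an auxiliary small complete graph. This quotient construction cannot be executed against adversarial lists: giving a single color to all $(n-1)^2$ edges between $V_i$ and $V_j$ requires a color common to all of their lists, and the adversary can make two such lists disjoint, so \cref{t:oddprime} applied to the index graph buys you nothing. (It is also the wrong template: the coloring of \cref{t:lowbd}(a) is designed to avoid double stars and deliberately contains monochromatic copies of $K_{1,n}$.) The mechanism that actually works --- and that you would need to supply --- is an \emph{edge decomposition} of $K_{p(n-1)+1}$ into edge-disjoint subgraphs of $K_{p(n-1)+1}$ itself, each of list chromatic index at most $p$ (for instance copies of $K_{p+1}$, via \cref{t:oddprime}), with every vertex lying in at most $n-1$ of the pieces. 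Each piece is then properly list-edge-colored from its own restricted lists, independently of the others; properness within a piece means a vertex sees each color at most once per piece, hence at most $n-1$ times in total, so no monochromatic $K_{1,n}$ arises. Your proposal never produces such a decomposition, does not address why one exists for \emph{all} $n\ge2$ (which is precisely what distinguishes this statement from \cref{t:liststars}(b) with its threshold $w(k)$), and explicitly leaves the ``transfer'' step open; as written the lower bound is not established.
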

  \begin{proof} By Theorem~\ref{t:stars}, we see that  $r_{\ell}(K_{1,n}; p) \le  r(K_{1,n};  p)\le p(n-1)+2$.  It suffices to show that $r_{\ell}(K_{1,n}; p) > p(n-1)+1$.  Let  $G = K_{p(n-1)+1}$.      Partition $ V(G)$ into $A, V_1, V_2, \ldots V_{n-1}$  such that $|A|=1$ and  $|V_1|=\cdots=|V_{n-1}|=p$.   For $i, j \in \left[n-1\right]$ with $i< j$, let $G_{i,j}$ be the complete bipartite subgraph    with partite sets $V_i$ and $ V_j $.  By Theorem~\ref{t:bip}  and Theorem~\ref{t:oddprime},    $\chi'_{\ell}(G_{i,j})=p$ and $\chi'_{\ell}(G[A\cup V_i])=p$. Note that   the   vertex in $A$  belongs to   $n-1$  subgraphs $G[A\cup V_1], \ldots, G[A\cup V_{n-1}]$,  and  each vertex in $V_1\cup\cdots\cup V_{n-1}$   belongs to exactly one of subgraphs  $G[A\cup V_1], \ldots, G[A\cup V_{n-1}]$ and    $n-2$  of the subgraphs $G_{i,j}$'s. By  \cite[Lemma 10]{ABKKS2019}, $r_{\ell}(K_{1,n}; p) > p(n-1)+1$, as desired. 
  \end{proof}
It is worth noting that  \cref{t:liststars}(a) fails to give a full characterization of the tightness of
the lower bound   but for $k=2$, the authors of~\cite{ABKKS2019}   gave such a
characterization and proved  that the two Ramsey numbers
are always equal.
\begin{thm}[Alon, Buci\'c, Kalvari, Kuperwasser, and Szab\'o~\cite{ABKKS2019}]\label{t:stars-2-col}
For every $n\in \mathbb{N}$ we have
$$\rl(K_{1,n};2)=r(K_{1,n};2)=
\begin{cases}
2n  &\text{ if } n\text{ is odd}\\
2n-1 &\text{ if } n \text{ is even.}

\end{cases}
$$
\end{thm}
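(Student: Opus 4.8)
The value of the ordinary Ramsey number is immediate from \cref{t:stars} with $k=2$: the bounds $2n-1=2(n-1)+1\le r(K_{1,n};2)\le 2(n-1)+2=2n$ hold, and the lower bound is attained exactly when both $n$ and $2$ are even, i.e.\ when $n$ is even. Hence $r(K_{1,n};2)$ equals $2n-1$ for $n$ even and $2n$ for $n$ odd, matching the right-hand side. Since $\rl(H;k)\le r(H;k)$ always, it remains to prove the reverse inequality $\rl(K_{1,n};2)\ge r(K_{1,n};2)$. The general lower bound $\rl(K_{1,n};2)\ge (n-1)\cdot 2+1=2n-1$ from \cref{t:liststars}(a) already settles the case $n$ even, where it meets the upper bound. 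So the entire problem reduces to showing, for $n$ odd, that $\rl(K_{1,n};2)\ge 2n$.

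Unwinding the definition, $\rl(K_{1,n};2)\ge 2n$ is equivalent to the following statement, which is where all the work lies: for $n$ odd and every list assignment $L\colon E(K_{2n-1})\to\binom{\NN}{2}$, there is an $L$-coloring in which every vertex is incident with at most $n-1$ edges of each color (equivalently, no monochromatic $K_{1,n}$). I would prove this by a local-search argument. Among all $L$-colorings $\phi$, choose one minimizing $\Phi(\phi):=\sum_{v,c}\ell_\phi(v,c)^2$, where $\ell_\phi(v,c)$ is the number of edges of color $c$ at $v$. The goal is to show the minimizer has $\ell_\phi(v,c)\le n-1$ everywhere; since every vertex of $K_{2n-1}$ has degree $2n-2$, an overloaded pair would satisfy $\ell_\phi(v_0,c_0)\ge n$ and hence $\ell_\phi(v_0,c)\le n-2$ for every other color $c$.

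The natural move is to take an edge $e=v_0u$ colored $c_0$, with $L(e)=\{c_0,c'\}$, and recolor it $c'$. A direct computation gives $\Delta\Phi=2\bigl(\ell(v_0,c')-\ell(v_0,c_0)+\ell(u,c')-\ell(u,c_0)\bigr)+4$, and since $\ell(v_0,c')-\ell(v_0,c_0)\le -2$ at an overloaded $v_0$, minimality forces $\ell(u,c')\ge\ell(u,c_0)$ at every such neighbour $u$. This is precisely the obstruction: a single recoloring need not decrease $\Phi$, because relieving $c_0$ at $v_0$ may overload $c'$ at $u$. The main step, and the hard part, is to upgrade single swaps to an \emph{alternating recoloring trail}: starting from the overloaded pair $(v_0,c_0)$ one recolors $v_0u$ to $c'$, then from $u$ recolors a $c'$-edge to its other list color, and so on, until the trail reaches a pair $(w,c^\ast)$ that is strictly underloaded; recoloring along the whole trail then strictly decreases $\Phi$, contradicting minimality. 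I would establish the existence of such a trail by a reachability argument: if no underloaded pair were reachable from $(v_0,c_0)$ through these alternating moves, the set of reachable pairs would be closed under the available list colors, and comparing the degree sum over this closed set with the uniform target $n-1$ yields a contradiction. This is exactly where the arithmetic of the $n$ odd case enters: each vertex has even degree $2n-2$ while $K_{2n-1}$ has odd order, so $\sum_v\ell(v,c)=2|E_c|$ is even and the average load per vertex is exactly $n-1$, leaving no slack. That is why the bound is tight and why the reachability/closedness count must be carried out exactly rather than through a lossy discrepancy estimate.

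As a sanity check on tightness and as intuition for the obstacle, I would note the decomposition of $K_{2n-1}$ into $n-1$ Hamiltonian cycles, each of odd length $2n-1$. One can color each cycle from its lists so that consecutive edges differ except at a single ``seam'' vertex, but an odd cycle forces at least one seam, and a naive distribution of the seams only yields the bound $n$ rather than $n-1$. This shows that the parity of the Hamiltonian cycles matches the parity obstruction in the load count, explains why $n$ odd is the genuinely delicate case, and confirms that any successful argument must coordinate the seams globally, as the alternating-trail argument does. Assembling the pieces---$\rl(K_{1,n};2)\le r(K_{1,n};2)$ in general, equality for $n$ even from \cref{t:liststars}(a), and the trail argument giving $\rl(K_{1,n};2)\ge 2n=r(K_{1,n};2)$ for $n$ odd---yields the claimed equalities.
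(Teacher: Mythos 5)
You should first note that the paper contains no proof of this statement at all: it is quoted verbatim from \cite{ABKKS2019} (as \cref{t:liststars} and \cref{t:oddprime} are), so there is no in-paper argument to compare against, and your proposal has to stand as a complete proof on its own. The easy parts of your assembly are fine: $r(K_{1,n};2)$ follows from \cref{t:stars} exactly as you say, $\rl\le r$ is immediate, and the even case closes using the lower bound of \cref{t:liststars}(a). So the entire content of the theorem is the claim you isolate, namely that for $n$ odd every list assignment $L\colon E(K_{2n-1})\to\binom{\NN}{2}$ admits an $L$-coloring in which every color has load at most $n-1$ at every vertex. That claim is where your argument has a genuine gap.

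Three concrete problems. First, the trail mechanics are not controlled: recoloring along your alternating trail $v_0u_1u_2\cdots u_k$ has, at each intermediate vertex $u_i$, the net effect of decreasing $\ell(u_i,c_{i-1})$ by one and increasing $\ell(u_i,c_{i+1})$ by one, contributing $2\bigl(\ell(u_i,c_{i+1})-\ell(u_i,c_{i-1})\bigr)+2$ to $\Delta\Phi$; these terms can be positive, so ending the trail at a strictly underloaded pair does \emph{not} by itself give $\Delta\Phi<0$, and the claimed contradiction with minimality does not follow. Second, the ``reachability/closedness'' counting argument that is supposed to produce the trail is never carried out; it is precisely the hard step, not a routine verification. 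Third, and most tellingly: every parity fact you cite as ``where $n$ odd enters'' --- even degree $2n-2$, odd order $2n-1$, $\sum_v\ell(v,c)=2|E_c|$ even, average load exactly $n-1$, and the odd length of the Hamiltonian cycles in the decomposition --- holds for \emph{all} $n$, even and odd alike. The parity that actually separates the two cases is different: a color class with load exactly $n-1$ at every vertex is an $(n-1)$-regular graph on $2n-1$ vertices, which exists iff $(n-1)(2n-1)$ is even, i.e., iff $n$ is odd. For $n$ even the statement you are trying to prove is simply false, already for constant lists: the degree-parity argument behind \cref{t:stars} shows every $2$-coloring of $E(K_{2n-1})$ contains a monochromatic $K_{1,n}$ when $n$ is even. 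Hence any correct proof must visibly break down for even $n$, and your sketch, as written, does not --- which shows the essential idea is still missing rather than merely unwritten.
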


By \cref{t:dstar1} and \cref{t:stars-2-col}, together with \cref{p:stardouble}, we see that for all $n\ge2$, \[\rl(S(n,1);2)=r(S(n,1);2)=r(K_{1, n+1};2) =\rl(K_{1, n+1};2)=\begin{cases}
2n+1  &\text{ if } n\text{ is odd}\\
2n+2 &\text{ if } n \text{ is even.}
\end{cases}
\tag{$*$}\] 

Moreover, for all odd $n \ge 5$, we have
 \[\rl(S(n,2);2)=r(S(n,2);2)=r(K_{1, n+1};2) =\rl(K_{1, n+1};2)=2n+1.\] 
For all $n \ge 3m$ such that $n$ is even or $m \ge 3$, we have
\[2n+2 =r(S(n,m);2)\ge \rl(S(n,m);2)\ge\rl(K_{1, n+1};2)= r(K_{1, n+1};2)=\begin{cases}
2n+1  &\text{ if } n\text{ is odd}\\
2n+2 &\text{ if } n \text{ is even.}
\end{cases}
\] 
Liu~\cite{HenryLiu} proved that $\rl(G;2)=r(G;2)$ for every graph  $G\in\{P_4, P_5, C_4\}$.  By \cref{p:stardouble}, we have $\rl(S(n, m);k)\ge \rl(K_{1, {n+1}};k)\ge kn+1$ due to   \cref{t:liststars}. Also,  $\rl(S(n, m);p)\ge \rl(K_{1, {n+1}};p)\ge pn+2$  for every odd prime $p$ due to    \cref{t:oddprimestar}. This, together with \cref{c:m=1}(a), implies that for all $n\ge(p-1)(p-2)$, \[pn+2=  \rl(K_{1, {n+1}};p)\le \rl(S(n, 1);p)\le r(S(n, 1);p) =pn+3.\tag{$\dagger$}\]  We are unable to   determine which side $\rl(S(n, 1);p)$ lies on in ($\dagger$). Note  that $r_{\ell}(S(1,1); k)\ge k+1$ by  \cref{p:stardouble} and \cref{t:liststars}(a).  We next prove a slightly improved lower bound for $r_{\ell}(S(1,1); p)$ for every odd prime $p$. Recall that  $P_4=S(1,1)$.

\begin{lem}\label{l:k-colorP4}
$r_{\ell}(P_4; p)  \ge p+3$  for every  odd prime $p$. 
\end{lem}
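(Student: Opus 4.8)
The plan is to establish the lower bound $r_{\ell}(P_4; p) \ge p+3$ by exhibiting a list assignment $L : E(K_{p+2}) \to \binom{\NN}{p}$ on the complete graph $G := K_{p+2}$ for which some $L$-coloring avoids a monochromatic $P_4 = S(1,1)$, thereby showing that $K_{p+2}$ is not large enough to force such a copy. Since $P_4$ is the path on four vertices, a monochromatic $P_4$ is simply three edges of the same color forming a path; equivalently, an $L$-coloring is $P_4$-free precisely when, for each color class, the subgraph of $G$ in that color has no component containing a $P_4$, i.e.\ every monochromatic component is a star or a triangle. So the heart of the construction is to assign lists so cleverly that every legal $L$-coloring splits into monochromatic pieces that are all stars or triangles.

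First I would recall that the analogous equality for proper list edge-coloring is governed by \cref{t:oddprime}, which gives $\chi'_{\ell}(K_{p+1}) = p$ for every odd prime $p$; this is exactly the tool that powers the star results in \cref{t:oddprimestar}, and I expect it to be the engine here as well. The natural idea is to take $p+2 = (p+1)+1$ vertices and use the list edge-chromatic structure of $K_{p+1}$ on a distinguished sub-$K_{p+1}$, handling the remaining vertex separately. Concretely, I would designate one vertex $v_0$ and look at $G - v_0 = K_{p+1}$. On $K_{p+1}$ one can hope to build lists forcing a proper-like behavior in which each color appears as a matching (a disjoint union of single edges), which is automatically $P_4$-free since a matching has no path on three edges. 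The extra vertex $v_0$ and its $p+1$ incident edges then need lists chosen so that, together with whatever colors the $K_{p+1}$ edges receive, no color is forced to build a path of length three through $v_0$.

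The key steps, in order, are: (i) set up the vertex set as $\{v_0\} \cup V(K_{p+1})$ and assign to the edges of the inner $K_{p+1}$ lists derived from a list edge-coloring scheme certified by \cref{t:oddprime}, so that in any $L$-coloring each color class restricted to $K_{p+1}$ is a matching; (ii) assign lists to the $p+1$ edges at $v_0$ from a disjoint or carefully overlapping palette so that each color reaching $v_0$ can contribute at most a star centered at $v_0$, never extending a matching edge inside $K_{p+1}$ into a $P_4$; (iii) verify that under every $L$-coloring, each monochromatic component is a star or a triangle and hence contains no $P_4$; and (iv) conclude $r_{\ell}(P_4; p) > p+2$, i.e.\ $r_{\ell}(P_4;p) \ge p+3$.

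The main obstacle I anticipate is step (ii)--(iii): controlling the interaction between the edges at $v_0$ and the matching structure inside $K_{p+1}$. A monochromatic $P_4$ can arise either entirely within $K_{p+1}$ (excluded if each color is a matching there) or by combining an edge at $v_0$ with two edges of $K_{p+1}$, or two edges at $v_0$ with one inner edge. Since a proper edge coloring of $K_{p+1}$ uses exactly $p$ colors with each color a perfect-or-near-perfect matching, a single color touches $v_0$'s neighbors at their matched edges, and the danger is that a color $c$ appearing on an edge $v_0 u$ also appears on a matching edge $u w$ inside $K_{p+1}$, giving a path $v_0\, u\, w$ that a third same-color edge could extend to $P_4$. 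The delicate part will be choosing the lists at $v_0$ (and possibly perturbing the inner lists) so that these cross-edges never align into a length-three monochromatic path; I would expect to exploit the freedom in the $p$-element lists, together with a parity or counting argument keyed to $p$ being an odd prime, to guarantee that every color class globally remains a disjoint union of stars, completing the verification in step (iii).
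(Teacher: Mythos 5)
Your approach has the quantifiers reversed, and this is fatal. By the definition in the paper, $\rl(H;k)$ is the smallest $n$ such that \emph{some} list assignment $L$ forces \emph{every} $L$-coloring of $K_n$ to contain a monochromatic $H$. Hence to prove $\rl(P_4;p)\ge p+3$ you must show that \emph{every} list assignment $L:E(K_{p+2})\to\binom{\NN}{p}$ admits at least one $P_4$-free $L$-coloring. Exhibiting a single cleverly chosen $L$ together with one good $L$-coloring, as you propose in your opening sentence, only shows that this particular $L$ is not a forcing assignment; it says nothing about the other assignments and therefore gives no lower bound on $\rl$. (Exhibiting one coloring is the right move for the ordinary Ramsey number, where there are no lists to choose; for the list Ramsey number the adversary chooses $L$ and you must answer every choice.) A second, related misreading: \cref{t:oddprime} states that for \emph{every} list assignment with lists of size $p$ on $E(K_{p+1})$ there \emph{exists} a proper edge coloring; it does not let you design lists under which every $L$-coloring is forced to be proper, which is exactly what steps (i)--(iii) of your plan require.

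The paper's argument runs in the opposite direction. Given an \emph{arbitrary} $L$ on $K_{p+2}$: if $L$ is constant, a $P_4$-free $p$-coloring of $K_{p+2}$ exists because $r(P_4;p)\ge 2p>p+2$ by \cref{t:ramseyP4}; otherwise some vertex $u$ satisfies $\bigl|\bigcup_{v\in N(u)}L(uv)\bigr|\ge p+1$, so its $p+1$ incident edges can be given pairwise distinct colors from their lists, and \cref{t:oddprime} supplies a proper (hence monochromatic-$K_{1,2}$-free) $L$-edge-coloring of $G\setminus u=K_{p+1}$; a monochromatic $P_4$ would need either two edges of the same color at $u$ or two adjacent edges of the same color in $K_{p+1}$, and neither occurs. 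If you want to salvage your write-up, keep your decomposition into $v_0$ and $K_{p+1}$, but let $L$ be arbitrary and use the two theorems to \emph{produce} a good coloring for that $L$ rather than trying to construct $L$ itself.
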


\begin{proof}
   Let $G:=K_{p+2}$.  Let $L: E(G)\to \binom{\mathbb{N}}{p}$ be an assignment of lists to the edges of $G$.  If $L$ is constant, then we are done by \cref{t:ramseyP4}. We may assume that there exists a vertex, say $u$, in $G$ such that $\left|\bigcup_{v\in N(u)} L(uv)\right| \geq p+1$. Now color the edges incident with $u$ differently. Since $\chi'_{\ell}( K_{p+1}) = p$ by \cref{t:oddprime},  we can color the edges of  $G\less u$   from $L$ such that it has no monochromatic $K_{1,2}$.  It follows that $G$ has no   monochromatic copy of $P_4$, as desired. 
\end{proof}

 \cref{l:k-colorP4}, combined with \cref{t:ramseyP4}, gives the exact value of $r_{\ell}(P_4; 3)$. 
 
\begin{cor}
$r_{\ell}(P_4; 3) =r(P_4; 3)= 6$.   
\end{cor}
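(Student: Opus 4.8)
The plan is to establish the equality $r_\ell(P_4;3)=r(P_4;3)=6$ by combining the two inequalities that sandwich $r_\ell(P_4;3)$, together with the already-known value of the ordinary Ramsey number. Since $3$ is an odd prime, \cref{l:k-colorP4} applied with $p=3$ immediately yields the lower bound $r_\ell(P_4;3)\ge 6$. For the upper bound, I would invoke the universal inequality $r_\ell(H;k)\le r(H;k)$ recorded in the Introduction, specialized to $H=P_4$ and $k=3$; by \cref{t:ramseyP4} we have $r(P_4;3)=r(S(1,1);3)=6$, so $r_\ell(P_4;3)\le 6$.

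Putting these together gives $6\le r_\ell(P_4;3)\le r(P_4;3)=6$, forcing equality throughout, which is exactly the claimed statement. There is essentially no computation to grind through: the corollary is a direct consequence of the preceding lemma and the cited value of $r(P_4;3)$, so the proof is a one- or two-line assembly of results already in hand.

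The only point that deserves a moment's care is making sure the lower bound from \cref{l:k-colorP4} matches the upper bound exactly, so that the inequalities close with no gap. With $p=3$ the lemma gives $r_\ell(P_4;3)\ge p+3 = 6$, and the known Ramsey value is also $6$, so the window collapses to a single integer. Thus I do not anticipate any genuine obstacle; the statement is a clean corollary whose proof reads: ``By \cref{l:k-colorP4}, $r_\ell(P_4;3)\ge 6$, while $r_\ell(P_4;3)\le r(P_4;3)=6$ by \cref{t:ramseyP4} and the inequality $r_\ell(H;k)\le r(H;k)$; hence $r_\ell(P_4;3)=r(P_4;3)=6$.''
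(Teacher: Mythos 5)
Your proposal is correct and is essentially identical to the paper's own proof: both obtain the lower bound $r_\ell(P_4;3)\ge 6$ from \cref{l:k-colorP4} with $p=3$ and the upper bound from $r_\ell(P_4;3)\le r(P_4;3)=6$ via \cref{t:ramseyP4}, then squeeze. No differences worth noting.
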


We end this section with an observation towards  $ \rl(S_n^m;2)$ when $m\in\{2,3\}$. Note that  for all $n\ge2$, we have $S_n^1=S(n-1,1)$;    $\rl(S_n^1;2)=\rl(S(n-1, 1);2)=r(K_{1,n};2)$ by ($*$).

\begin{thm}\label{t:m=23}Let $m\in\{2,3\}$ and $n\ge3m-1+\varepsilon$ be integers, where $\varepsilon$ is the remainder of $n-1$ when divided by $2$. Then \[\rl(S_n^m;2)=r(S_n^m;2)=r(K_{1,n};2)=\rl(K_{1,n};2)=
\begin{cases}
2n  &\text{ if } n\text{ is odd}\\
2n-1 &\text{ if } n \text{ is even.}
\end{cases}\] 
\end{thm}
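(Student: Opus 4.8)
The plan is to reduce the four-way equality to a single \emph{upper} bound on the ordinary Ramsey number. Since $S_n^m$ contains $K_{1,n}$ as a subgraph, monotonicity of both Ramsey numbers under taking subgraphs gives $r(S_n^m;2)\ge r(K_{1,n};2)$ and $\rl(S_n^m;2)\ge \rl(K_{1,n};2)$, while $\rl(H;k)\le r(H;k)$ always holds and \cref{t:stars-2-col} gives $\rl(K_{1,n};2)=r(K_{1,n};2)$. Writing $N:=r(K_{1,n};2)$, which equals $2n-1$ if $n$ is even and $2n$ if $n$ is odd, these facts assemble into the chain
\[
r(K_{1,n};2)=\rl(K_{1,n};2)\le \rl(S_n^m;2)\le r(S_n^m;2).
\]
Hence it suffices to prove the reverse inequality $r(S_n^m;2)\le N$; everything is then squeezed to equality. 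So the whole statement comes down to showing that every $2$-coloring of $K_N$ contains a monochromatic $S_n^m$.

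First I would locate a monochromatic $K_{1,n}$, which exists because $N=r(K_{1,n};2)$; say it is blue with center $c$ and leaf set $A$ with $|A|=n$, and set $B:=V(G)\setminus(\{c\}\cup A)$, so $|B|=N-n-1$ (equal to $n-1$ when $n$ is odd and $n-2$ when $n$ is even). I would then examine the bipartite graph of \emph{blue} edges between $A$ and $B$. If it has a matching of size $m$, a blue $S_n^m$ is built immediately: the $m$ matched vertices of $A$ become the subdivision vertices (each joined to $c$ in blue and to its matched $B$-partner in blue), and the remaining $n-m$ vertices of $A$ become the direct leaves at $c$. Otherwise, by K\"onig's theorem (\cref{t:mmmc}) there is a vertex cover $C$ with $|C|\le m-1\le 2$; setting $A':=A\setminus C$ and $B':=B\setminus C$, every edge between $A'$ and $B'$ is \emph{red}, with $|A'|\ge n-m+1$ and $|B'|\ge N-n-m$.

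The remaining work is to exhibit a red $S_n^m$ inside this near-complete red bipartite structure $A'\times B'$, augmented by the few red edges incident to the at most $m-1$ cover vertices and to $c$. The clean case is $C\subseteq B$: then $|A'|=n$, and centering the red $S_n^m$ at a vertex of $B'$ with all of $A'$ as its $n$ red neighbors and $m$ further vertices of $B'$ as outer leaves works as soon as $|B'|\ge m+1$, which the hypothesis $n\ge 3m-1+\varepsilon$ guarantees. The delicate case is $C\cap A\ne\emptyset$, where $|A'|<n$ forces me to recover the missing degree by tracking where the extra red and blue edges lie (inside $B$, and between $c$ and $B$): depending on their colors one either completes a red center in $A'$ or $B'$ using a cover vertex as an extra neighbor, or finds that $c$ (or a cover vertex lying in $A$) has blue-degree at least $n$ and routes the two blue subdivisions through $B$.

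I expect this last case to be the main obstacle. Unlike \cref{t:substar-up}, where the looser bound $N=2n+m$ leaves enough room in $B'$ to apply the double-counting of \cref{p(x)}, here $N$ is as small as $2n-1$, so $|B'|$ can be strictly less than $n$ and no single vertex need be the center of a monochromatic $K_{1,n}$ lying entirely inside $B'$. It is precisely to close this gap that the restriction $m\le 3$ (which bounds $|C|\le 2$, so that only one or two cover vertices must be handled) and the parity-corrected threshold $n\ge 3m-1+\varepsilon$ are used; the $+\varepsilon$ compensates for the single vertex lost when $n$ is even and $N=2n-1$.
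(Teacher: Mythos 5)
Your reduction to the single inequality $r(S_n^m;2)\le r(K_{1,n};2)$ is exactly the paper's opening step, and your ``clean'' cases are sound: a blue matching of size $m$ between $A$ and $B$ does give a blue $S_n^m$, and when the K\"onig cover satisfies $C\subseteq B$ the red $S_n^m$ centered in $B'$ works under the stated hypothesis. But the proof is not complete. The case $C\cap A\ne\emptyset$, which you yourself flag as ``the main obstacle,'' is described only conditionally (``depending on their colors one either \ldots or \ldots''), and this is precisely where the entire content of the theorem lives: in the tight regime $N=2n-\varepsilon$ a vertex of $B'$ is short up to $m-1$ red neighbors, a vertex of $A'$ sees only $|B'|<n$ vertices across the red bipartite graph, and so one is forced into a genuine structural analysis of the edges inside $B$, between $B$ and the cover vertices, and between $B$ and the center $c$; for $m=3$ with $|C|=2$ the subcases (both cover vertices in $A$, split, etc.) multiply further. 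Nothing you wrote rules out that this analysis closes --- a quick check suggests it does for $m=2$ --- but you have not carried it out, and it is comparable in length and delicacy to the paper's entire proof, which consists almost exclusively of exactly this kind of case work.

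It is also worth noting that the paper sidesteps your bottleneck by choosing a different starting structure. Instead of a monochromatic $K_{1,n}$ followed by \cref{t:mmmc}, it extracts a monochromatic copy of $S_n^{m-1}$ (available by minimality of $m$ when $m=3$, and by \cref{t:dstar1} when $m=2$ since $S_n^1=S(n-1,1)$), so that only one subdivision is missing; all edges from the $n-m+1$ undivided leaves to the outside are then forced blue at once, and the subsequent analysis concerns a single color-forced bipartition rather than a cover of unknown location. To finish your route you would need to write out the $C\cap A\ne\emptyset$ analysis in full for both values of $m$; alternatively, adopt the paper's bootstrap on $m$.
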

\begin{proof}  Let $n, m$ and $\varepsilon$ be given as in the statement.  Note that  $r(S_n^m;2)\ge \rl(S_n^m;2)\ge \rl(K_{1,n};2)$. By \cref{t:stars-2-col}, it suffices to show that $r(S_n^m;2)\le  r(K_{1,n};2)$.  By \cref{t:stars}, we have $r(K_{1,n};2)=2n-\varepsilon$.   Let $(G,\tau)$ be a complete, $2$-edge-colored $K_{N}$ using colors  red and blue, where  $N=2n-\varepsilon$. Since $S_n^1=S(n-1, 1)$, by \cref{t:dstar1}, we have $r(S_n^1;2) = 2n-\eps = N$ for $n \ge3m-1+\eps\ge 5+\eps$. Thus, $(G,\tau)$ contains a monochromatic copy of $S_n^1$. Suppose $(G,\tau)$  is $S_n^m$-free. We choose $(G,\tau)$ such that $(G,\tau)$ is $S_n^\ell$-free and $\ell$ is minimum. Then $2\le\ell\le m$ and     $(G,\tau)$  must contain a monochromatic copy of $H:=S_n^{\ell-1}$, say in color red.      Let $V(H):=\{x, y_1, \ldots, y_n, z_1, \ldots, z_{\ell-1}\}$ such that $E(H)=\{xy_1, \ldots, xy_n, y_1z_1, \ldots, y_{\ell-1}z_{\ell-1}\}$. Let $A:=\{y_\ell, \ldots, y_n\}$,   $B := V(G)\setminus V(H)$, and $C:=V(H)\less A$.  Then all edges  between $A$ and $B$ are colored blue. Note that $|B|=N-|V(H)|=(2n-\varepsilon)-(n+\ell)=n-\ell-\varepsilon\ge 2m-1$. Let $ B:=\{b_1, \ldots, b_{n-\ell-\varepsilon}\}$.\medskip

We first consider the case $\ell=2$. Then  $n\ge5$ and   $|B| =n-2-\varepsilon\ge  3$.    Suppose some vertex of $B$, say $b_1 \in B$,  is blue-adjacent to some vertex $u\in C$.   Then $(G,\tau)$ contains a blue $S_n^2$ with edge set $\{b_1u, b_1y_2, \ldots, b_1y_n, y_nb_2, y_{n-1}b_3\}$, a contradiction.  Thus $B$ is red-complete to $ \{x, y_1, z_1\}$.    But then  we obtain a red $S_n^2$ with edge set $\{xb_1, b_1y_1, xb_2, b_2z_1, xy_2, \ldots, xy_{n-1}\}$, a contradiction.\medskip

We next  consider the case $\ell=3$.      Then  $n\ge 8$ and  $|B|=n-3-\varepsilon\ge  5$.  We claim that  every vertex in $B$ is blue-adjacent to exactly one vertex in $C$. Suppose, say $b_1\in B$, is red-complete to $C$ or blue-adjacent to   two distinct vertices, say, $u_1, u_2$,  in $C$. In the former case, $b_1$ is blue-complete to $B\less b_1$. Also,  $\{z_1, z_2\}$ is blue-complete to $A$, else we have a red $S_n^3$.   But then $(G,\tau)$ contains a blue $S_n^3$ with edge set $\{b_1b_2, b_1 b_3, b_1y_3, \ldots, b_1y_n, y_nz_1, y_{n-1}z_2, y_{n-2}b_4\}$, a contradiction. In the latter case,   $(G,\tau)$ contains a blue $S_n^3$ with edge set $\{b_1u_1, b_1u_2, b_1y_3, \ldots, b_1y_n, y_nb_2, y_{n-1}b_3, y_{n-2}b_4\}$, a contradiction.  Thus  every vertex in $B$ is blue-adjacent to exactly one vertex in $C$, as claimed. It follows that all edges in $G[B]$ are colored red, else say $b_1b_2$ is colored blue. Since every vertex in $B$ is blue-adjacent to exactly one vertex in $C$, we may assume that $b_1$ is blue-adjacent to $u\in C$. But then  $(G,\tau)$ contains a blue $S_n^3$ with edge set $\{b_1u, b_1b_2,   b_1y_3, \ldots, b_1y_n, y_nb_3, y_{n-1}b_4, y_{n-2}b_5\}$, a contradiction. This proves that all edges in $G[B]$ are colored red. Then $x$ is blue-complete to $B$, and  $B$ is red-complete to $C\less x$  because every vertex in $B$ is blue-adjacent to exactly one vertex in $C$. Thus    $\{z_1, z_2\}$ is blue-complete to $A$, else we obtain a red $S_n^3$.     
Finally, suppose some vertex of $A$, say $y_3\in  A$,  is blue-complete to $\{y_1, y_2\}$.  Then $(G,\tau)$ contains a blue $S_n^3$ with edge set $\{y_3y_1, y_3y_2,    y_3z_1, y_3z_2, y_3b_1,\ldots, y_3b_{n-4}, b_1y_4, b_2y_5, b_3 y_6\}$, a contradiction.  
Thus  no vertex in $A$  is blue-complete to $\{y_1, y_2\}$. It follows that either 
$y_1$ or $y_2$ is red-adjacent to at least $|A|/2=(n-2)/2\ge 3$ vertices in $A$. We may assume that $y_1$ is red-complete to $\{y_3, y_4, y_5\}$.  But then  $(G,\tau)$ contains a red $S_n^3$ with edge set   \[\{y_1x,   y_1y_3, y_1y_4, y_1y_5, y_1b_1, \ldots, y_1b_{n-4},  xy_2, b_1z_1, b_2z_2\},\] a contradiction.  \end{proof}

It seems that   our proof method of \cref{t:m=23}  can be extended to  show that $\rl(S_n^m;2)=r(S_n^m;2)$  when $n$ is sufficiently larger compared to $m$ for all $m\ge4$. However, \cref{t:m=23}  does not hold when $n\le 2m-1$.

\begin{lem}
 For all $n\ge2$ and $n\ge m\ge  1$, we have  $r(S^m_n; 2) \ge n+2m$. In particular, 
 \[r(S^m_n; 2)>2n\ge r(K_{1, n};2)\] for all  $n\le 2m-1 $. 
\end{lem}

\begin{proof} Let $G: =K_{n+2m-1}$. We partition the vertex set of $G$ into     $A$ and $ B$   such that $|A|=n+m$ and $|B|=m-1$. Let $\tau$ be a $2$-edge-coloring of $G$ by coloring all  edges in $G[A]$ and $G[B]$ red,  and all  edges   between $A$ and $B$ blue.  It is simple  to check that $(G,\tau)$ is  $S_n^m$-free. Therefore,  $r(S^m_n; 2) \ge n+2m$, as desired. By \cref{t:stars}, we have $r(S^m_n; 2)>2n\ge r(K_{1, n};2)$ for all $n\le 2m-1$. 
\end{proof}

\section*{Acknowledgements}
 We are indebted to  anonymous referees for their valuable comments which greatly improve the presentation of this paper. The second author would like to thank Henry Liu for sending her the reference  \cite{HenryLiu}.

\end{document}